\newfont{\footsc}{cmcsc10 at 8truept}
\newfont{\footbf}{cmbx10 at 8truept}
\newfont{\footrm}{cmr10 at 10truept}
\newtheorem{theorem}{Theorem}[section]
\newtheorem{claim}[theorem]{Claim}
\newtheorem{fact}[theorem]{Fact}
\newtheorem{lemma}[theorem]{Lemma}
\newtheorem{proposition}[theorem]{Proposition}
\newenvironment{proof}[1][Proof]{\noindent{\textbf {#1}  }}  {\hfill$\Box$\bigskip}
\begin{document}

\title{The number of cliques in graphs of given order and size}
\author{V. Nikiforov\\{\small Department of Mathematical Sciences, University of Memphis, Memphis,
TN 38152}\\{\small e-mail:} {\small vnikifrv@memphis.edu}}
\maketitle

\begin{abstract}
Let $k_{r}\left(  n,m\right)  $ denote the minimum number of $r$-cliques in
graphs with $n$ vertices and $m$ edges. For $r=3,4$ we give a lower bound on
$k_{r}\left(  n,m\right)  $ that approximates $k_{r}\left(  n,m\right)  $ with
an error smaller than $n^{r}/\left(  n^{2}-2m\right)  .$

The solution is based on a constraint minimization of certain multilinear
forms. Our proof combines a combinatorial strategy with extensive analytical
arguments.\bigskip

\textbf{AMS classification: }

\textbf{Keywords: }\textit{number of cliques; mulitilinear forms; Tur\'{a}n
graph.}

\end{abstract}

\section*{Introduction}

Our graph-theoretic notation follows \cite{Bol98}; in particular,\ an
$r$-clique is a complete subgraph on $r$ vertices.

What is the minimum number $k_{r}\left(  n,m\right)  $ of $r$-cliques in
graphs with $n$ vertices and $m$ edges? This problem originated with the
famous graph-theoretical theorem of Tur\'{a}n more than sixty years ago, but
despite numerous attempts, never got a satisfactory solution, see
\cite{Bol76}, \cite{Erd62}, \cite{Erd69}, \cite{Fis92}, \cite{LoSi83}, and
\cite{Raz06} for some highlights of its long history. Most recently, the
problem was discussed in detail in \cite{BCLSV06}.

The best result so far is due to Razborov \cite{Raz06}. Applying tools
developed in \cite{Raz05}, he achieved a remarkable progress for $r=3.$ But
this method failed for $r>3,$ and Razborov challenged the mathematical
community to extend his result.

The aim of this paper is to answer this challenge. We introduce a class of
multilinear forms and find their minima subject to certain constraints. As a
consequence, for $r=3,4$ we obtain a lower bound on $k_{r}\left(  n,m\right)
$, approximating $k_{r}\left(  n,m\right)  $ with an error smaller than
$n^{r}/\left(  n^{2}-2m\right)  .$

In our proof, a combinatorial main strategy cooperates with analytical
arguments using Taylor's expansion, Lagrange's multipliers, compactness,
continuity, and connectedness. We believe that such cooperation can be
developed further and applied to other problems in extremal combinatorics.

It seems likely that these methods will enable the solution of the problem for
$r>4$ as well. With this idea in mind we present all results as general as possible.

\section{Main results}

Suppose $1\leq r\leq n,$ let $\left[  n\right]  =\left\{  1,\ldots,n\right\}
,$ and write $\binom{\left[  n\right]  }{r}$ for the set of $r$-subsets of
$\left[  n\right]  .$ For a symmetric $n\times n$ matrix $A=\left(
a_{ij}\right)  $ and a vector $\mathbf{x}=\left(  x_{1},\ldots,x_{n}\right)
,$ set
\begin{equation}
L_{r}\left(  A,\mathbf{x}\right)  =%
%TCIMACRO{\dsum _{X\in\binom{\left[  n\right]  }{r}}}%
%BeginExpansion
{\displaystyle\sum_{X\in\binom{\left[  n\right]  }{r}}}
%EndExpansion
\text{ }%
%TCIMACRO{\tprod \limits_{i,j\in X,\text{ }i<j}}%
%BeginExpansion
{\textstyle\prod\limits_{i,j\in X,\text{ }i<j}}
%EndExpansion
a_{ij}%
%TCIMACRO{\tprod \limits_{i\in X}}%
%BeginExpansion
{\textstyle\prod\limits_{i\in X}}
%EndExpansion
x_{i}. \label{defl1}%
\end{equation}
Define the set $\mathcal{A}\left(  n\right)  $ of symmetric $n\times n$
matrices $A=\left(  a_{ij}\right)  $ by
\[
\mathcal{A}\left(  n\right)  =\left\{  A:\text{ }a_{ii}=0\text{ and }0\leq
a_{ij}=a_{ji}\leq1\text{ for all }i,j\in\left[  n\right]  \right\}  .
\]
Our main goal is to find $\min L_{r}\left(  A,\mathbf{x}\right)  $ subject to
the constraints%
\[
A\in\mathcal{A}\left(  n\right)  ,\text{ }\mathbf{x}\geq0,\text{ }L_{1}\left(
A,\mathbf{x}\right)  =b,\text{\ and }L_{2}\left(  A,\mathbf{x}\right)  =c,
\]
where $b$ and $c$ are fixed positive numbers. Since every $L_{s}\left(
A,\mathbf{x}\right)  $ is homogenous of first degree in each $x_{i},$ for
simplicity we assume that $b=1$ and study%
\begin{equation}
\min\left\{  L_{r}\left(  A,\mathbf{x}\right)  :\left(  A,\mathbf{x}\right)
\in\mathcal{S}_{n}\left(  c\right)  \right\}  , \label{mainp}%
\end{equation}
where $\mathcal{S}_{n}\left(  c\right)  $ is the set of pairs $\left(
A,\mathbf{x}\right)  $ defined as%
\[
\mathcal{S}_{n}\left(  c\right)  =\{\left(  A,\mathbf{x}\right)  :\text{ }%
A\in\mathcal{A}\left(  n\right)  ,\text{\ }\mathbf{x}\geq0,\text{ }%
L_{1}\left(  A,\mathbf{x}\right)  =1,\text{ and }L_{2}\left(  A,\mathbf{x}%
\right)  =c\}.
\]
Note that $\mathcal{S}_{n}\left(  c\right)  $ is compact since the functions
$L_{s}\left(  A,\mathbf{x}\right)  $ are continuous; hence (\ref{mainp}) is
defined whenever $\mathcal{S}_{n}\left(  c\right)  $ is nonempty. The
following proposition, proved in \ref{pp0}, describes when $\mathcal{S}%
_{n}\left(  c\right)  \neq\varnothing$.

\begin{proposition}
\label{pro0}$\mathcal{S}_{n}\left(  c\right)  $ is nonempty if and only if
$c<1/2$ and $n\geq\left\lceil 1/\left(  1-2c\right)  \right\rceil .$
\end{proposition}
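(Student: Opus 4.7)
My approach is to reduce both directions to the single elementary inequality
\[
c \le \frac{n-1}{2n},
\]
which is equivalent to the stated hypothesis: rearranging $n(1-2c)\ge 1$ yields both $c<1/2$ (needed for $1-2c>0$) and $n\ge 1/(1-2c)$, which for integer $n$ is the same as $n\ge \lceil 1/(1-2c)\rceil$.

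For the forward direction, given any $(A,\mathbf{x})\in\mathcal{S}_n(c)$, I would estimate $L_2$ from above in two steps. First, replacing each $a_{ij}$ by $1$ (using $0\le a_{ij}\le 1$ and $x_i\ge 0$) yields $L_2(A,\mathbf{x})\le \sum_{i<j} x_i x_j = \tfrac12\bigl(1-\sum_i x_i^2\bigr)$. Second, the Cauchy--Schwarz bound $\sum_i x_i^2 \ge (\sum_i x_i)^2/n = 1/n$ gives $c\le (n-1)/(2n)$, from which the two stated conditions follow at once.

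For the reverse direction, assuming $c\le (n-1)/(2n)$, I would produce an explicit member of $\mathcal{S}_n(c)$ via the uniform vector $x_i := 1/n$ together with the constant off-diagonal matrix $a_{ij}:=\alpha$ for $i\neq j$, where $\alpha:=2cn/(n-1)$. The hypothesis guarantees $\alpha\in[0,1]$, so $A\in\mathcal{A}(n)$; a direct computation yields $L_1(A,\mathbf{x})=1$ and $L_2(A,\mathbf{x})=\alpha(n-1)/(2n)=c$, certifying membership.

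This proposition contains no genuine obstacle: necessity is a one-line Cauchy--Schwarz argument and sufficiency is an explicit construction with constant off-diagonal entries. The only point requiring care is the bookkeeping that translates between the clean reformulation $c\le (n-1)/(2n)$ and the ceiling-based form stated in the proposition, which is routine algebra.
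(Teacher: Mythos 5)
Your forward direction is exactly the paper's: bound $L_2$ above by replacing each $a_{ij}$ with $1$, use $\sum_i x_i^2\ge 1/n$, and get $c\le(n-1)/(2n)$, which is equivalent to the stated ceiling condition for integer $n$. Your reverse direction, however, takes a genuinely different and in fact slightly cleaner route. The paper fixes the matrix $A$ to be the all-ones off-diagonal matrix and then solves a quadratic system for a two-valued vector $(x,\ldots,x,y)$; verifying that this system admits a nonnegative solution is exactly the discriminant condition $c\le(n-1)/(2n)$, but the paper leaves that check implicit. You instead fix the vector to be uniform, $x_i=1/n$, and tune the single scalar $\alpha=2cn/(n-1)$ in the constant off-diagonal matrix; the membership check is then trivial arithmetic, and the bound $\alpha\le 1$ is precisely your clean reformulation. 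The paper's choice of witness is not accidental — the configuration $(A_s,\mathbf{x}_c)$ with an all-ones matrix and a two-valued vector is the one that later turns out to be extremal for $\varphi_r$, so the construction in Proposition~\ref{pro0} previews that structure. Your construction buys simplicity of verification at the cost of that foreshadowing; both are correct. One small point worth making explicit in either proof: nonemptiness also forces $c\ge 0$ since $L_2(A,\mathbf{x})\ge 0$ termwise, and both constructions tacitly assume $c\ge 0$ (your $\alpha\ge 0$ needs it); this matches the paper's standing assumption that $c>0$.
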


Hereafter we assume that $0<c<1/2$ and set $\xi\left(  c\right)  =\left\lceil
1/\left(  1-2c\right)  \right\rceil .$

To find (\ref{mainp}), we solve a seemingly more general problem: for all
$c\in\left(  0,1/2\right)  ,$ $n\geq\xi\left(  c\right)  ,$ and $3\leq r\leq
n,$ find
\[
\varphi_{r}\left(  n,c\right)  =\min\left\{  L_{r}\left(  A,\mathbf{x}\right)
:\text{ }r\leq k\leq n,\text{ }\left(  A,\mathbf{x}\right)  \in\mathcal{S}%
_{k}\left(  c\right)  \right\}  .
\]
We obtain the solution of (\ref{mainp}) by showing that, in fact, $\varphi
_{r}\left(  n,c\right)  $ is independent of $n$.

To state $\varphi_{r}\left(  n,c\right)  $ precisely, we need some
preparation. Set $s=\xi\left(  c\right)  $ and note that the system%
\begin{align}
\binom{s-1}{2}x^{2}+\left(  s-1\right)  xy  &  =c,\label{cond1}\\
\left(  s-1\right)  x+y  &  =1,\label{cond2}\\
x  &  \geq y\nonumber
\end{align}
has a unique solution
\begin{equation}
x=\frac{1}{s}+\frac{1}{s}\sqrt{1-\frac{2s}{s-1}c},\text{ \ \ \ }y=\frac{1}%
{s}-\frac{s-1}{s}\sqrt{1-\frac{2s}{s-1}c}. \label{sol}%
\end{equation}
Write $\mathbf{x}_{c}$ for the $s$-vector $\left(  x,\ldots,x,y\right)  $ and
let $A_{s}\in\mathcal{A}\left(  s\right)  $ be the matrix with all
off-diagonal entries equal to $1.$ Note that equations (\ref{cond1}) and
(\ref{cond2}) give $\left(  A_{s},\mathbf{x}_{c}\right)  \in\mathcal{S}%
_{s}\left(  c\right)  .$

Setting $\varphi_{r}\left(  c\right)  =L_{r}\left(  A_{s},\mathbf{x}%
_{c}\right)  ,$ we arrive at the main result in this section.

\begin{theorem}
\label{mainTh}If $c\in\left(  0,1/2\right)  ,$ $r\in\left\{  3,4\right\}  ,$
and $r\leq\xi\left(  c\right)  \leq n,$ then $\varphi_{r}\left(  n,c\right)
=\varphi_{r}\left(  c\right)  .$
\end{theorem}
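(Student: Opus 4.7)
The plan is to take a minimizer $(A^{*},\mathbf{x}^{*})\in\mathcal{S}_{k}(c)$ for some $\xi(c)\leq k\leq n$ achieving $\varphi_{r}(n,c)$ — existence is guaranteed by the compactness of $\mathcal{S}_{k}(c)$ and the continuity of $L_{r}$ — and to show that $(A^{*},\mathbf{x}^{*})$ may be chosen so that $A^{*}$ encodes a complete graph on $s=\xi(c)$ vertices with weight vector equal to $\mathbf{x}_{c}$ up to permutation. First I would reduce to $\{0,1\}$-valued matrices. Since $L_{r}(A,\mathbf{x})$ is affine in each off-diagonal entry $a_{ij}$, $L_{1}$ is independent of $A$, and $L_{2}$ is also affine in $A$, freezing all coordinates except a pair $(a_{ij},a_{i'j'})$ yields a one-parameter family of feasible matrices on which $L_{r}$ is affine. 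Its minimum is attained at an endpoint of the parameter segment, so at least one of the two entries can be pushed into $\{0,1\}$. Iterating this ``squeeze'' and using a continuity/connectedness argument to dispose of segments on which $L_{r}$ is constant, one arrives at a minimizer with $A^{*}\in\{0,1\}^{k\times k}$, i.e., a graph $G$ on the support of $\mathbf{x}^{*}$ with vertex weights satisfying $L_{1}=1$ and $L_{2}=c$.

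Next I would extract the structure of $(G,\mathbf{x}^{*})$ by applying Lagrange multipliers in the $x$-variables. At every interior coordinate $x_{i}>0$,
\[
\frac{\partial L_{r}}{\partial x_{i}}=\lambda\,\frac{\partial L_{1}}{\partial x_{i}}+\mu\,\frac{\partial L_{2}}{\partial x_{i}},
\]
while coordinates with $x_{i}=0$ satisfy the corresponding one-sided inequality. Combined with the multilinearity of each $L_{s}$, these balance equations should force the support of $\mathbf{x}^{*}$ to lie inside a single clique of $G$: otherwise a transfer of weight from a vertex with small $r$-clique ``degree'' to one with large degree, compensated by a simultaneous adjustment of appropriate $a_{ij}$-entries in the face of $\mathcal{A}(k)$ determined by $G$, would strictly decrease $L_{r}$ while preserving both $L_{1}$ and $L_{2}$.

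Once $\mathbf{x}^{*}$ is confined to a clique $K_{t}$ of $G$, the problem collapses to a finite-dimensional minimization over weights $(x_{1},\ldots,x_{t})$ subject to two polynomial constraints. Proposition~\ref{pro0} immediately gives $t\geq s$. For the reverse inequality I would run a second round of Lagrange multipliers on $K_{t}$, analyze the resulting critical point system together with its boundary strata $\{x_{i}=0\}$, and use a Taylor expansion of $L_{r}$ about the candidate optimum $\mathbf{x}_{c}$ to rule out competing extrema. The restriction to $r\in\{3,4\}$ surfaces here: for $r=3$ the Lagrange system is quadratic and can be solved by hand, for $r=4$ it is cubic but still tractable by careful sign analysis, while for larger $r$ the nonsymmetric critical points become much harder to eliminate. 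The main obstacle I anticipate is the rigidity step of the preceding paragraph, collapsing the support onto a single clique, since the simultaneous preservation of the two affine constraints $L_{1}$ and $L_{2}$ under joint weight/matrix perturbations requires a delicate combinatorial-analytic coordination — which is presumably why the introduction advertises compactness, continuity, and connectedness together. Once the support lives on $K_{s}$, the independence of $\varphi_{r}(n,c)$ from $n$ is automatic, the surplus $n-s$ vertices necessarily carrying zero weight.
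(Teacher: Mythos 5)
Your high-level plan matches the paper's in outline: reduce to $(0,1)$-matrices, show the optimal $A$ is the adjacency matrix of a complete graph, then pin down the weights. The $\{0,1\}$-reduction you sketch is plausible and roughly parallels the paper's Claim~\ref{cl00}, although the paper co-varies one matrix entry $a_{ij}$ with the two coordinates $x_i,x_j$ rather than pairing two matrix entries, and it uses the Lagrange conditions in an essential way to keep both $L_2$ and $L_r$ fixed along the perturbation (your pairing scheme also breaks when only one entry of $A$ lies strictly between $0$ and $1$).

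The crucial gap is the step you yourself flag as the main obstacle: collapsing the support onto a single clique, i.e.\ showing $G$ is complete. You assert that a weight transfer from a low-degree vertex to a high-degree one ``should'' decrease $L_r$ while preserving the constraints, but no such single transfer does; preserving $L_2=c$ forces a coordinated joint perturbation of weights and/or matrix entries, and the second-order effect on $L_r$ has no obvious sign. The paper spends essentially all of its effort here: it shows $C_i\neq C_j$ across non-edges (Claim~\ref{cl1}), $D_{ij}<\lambda$ across edges (Claim~\ref{cl2}), then that $\overline G$ is triangle-free (Claim~\ref{cl4}), then bipartite with a sign-coherent ordering of the potentials $C_i$ (Claims~\ref{cl5}, \ref{cl6}), then --- via an induction on $n$ --- that $G$ has an induced $4$-cycle (Claim~\ref{cl7}), and finally that for any induced $4$-cycle $(i,j,k,l)$ one must have $D_{ij}+D_{kl}<D_{jk}+D_{li}$ (Claim~\ref{cl7.1}), which is then contradicted by a combinatorial count. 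Your proposal contains no analogue of any of these steps, so the core of the theorem is unproved.

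You also misplace the restriction to $r\in\{3,4\}$. The Lagrange analysis on the complete graph --- the paper's Claim~\ref{cl8} --- is a statement about three real variables that is completely independent of $r$; applied to $e_r(\mathbf{x})$ it works for every $r\geq 3$ because $e_{r-3}$ of the remaining positive weights is positive. What actually breaks for $r>4$ is the counting argument that refutes Claim~\ref{cl7.1}: for $r=3$ the contradiction is immediate, for $r=4$ it requires the technical Lemma~\ref{le1} on sums over neighborhood differences, and for $r>4$ the analogous inequality is not established. The $r$-dependent difficulty lives in forcing $G$ to be complete, not in the endgame on the clique.
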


Note first that the premise $r\leq\xi\left(  c\right)  $ is not restrictive,
for, $\varphi_{r}\left(  n,c\right)  =0$ whenever $r>\xi\left(  c\right)  .$
Indeed, assume that $r>\xi\left(  c\right)  $ and write $\mathbf{y}$ for the
$r$-vector $\left(  x,\ldots,x,y,0,\ldots,0\right)  $ whose last $r-s$ entries
are zero. Writing $B$ for the $r\times r$ matrix with $A_{s}$ as a principal
submatrix in the first $s$ rows and with all other entries being zero, we see
that $\left(  B,\mathbf{y}\right)  \in\mathcal{S}_{r}\left(  c\right)  $ and
$L_{r}\left(  B,\mathbf{y}\right)  =0;$ hence $\varphi_{r}\left(  n,c\right)
=0,$ as claimed.

Next, note an explicit form of $\varphi_{r}\left(  c\right)  :$%
\begin{align*}
\varphi_{r}\left(  c\right)   &  =\binom{s-1}{r}x^{r}+\binom{s-1}{r-1}%
x^{r-1}y\\
&  =\binom{s}{r}\frac{1}{s^{r}}\left(  1-\left(  r-1\right)  \sqrt{1-\frac
{2s}{s-1}c}\right)  \left(  1+\sqrt{1-\frac{2s}{s-1}c}\right)  ^{r-1}.
\end{align*}

Since $\varphi_{r}\left(  c\right)  $ is defined via the discontinuous step
function $\xi\left(  c\right)  ,$ the following properties of $\varphi
_{r}\left(  c\right)  $ are worth stating:

- $\varphi_{r}\left(  c\right)  $ is continuous for $c\in\left(  0,1/2\right)
;$

- $\varphi_{r}\left(  c\right)  =0$ for $c\in\left(  0,1/4\right]  $ and is
increasing for $c\in\left(  1/4,1/2\right)  ;$

- $\varphi_{r}\left(  c\right)  $ is differentiable and concave in any
interval $\left(  \left(  s-1\right)  /2s,s/2\left(  s+1\right)  \right)  .$

\subsection{The number of cliques}

Write $k_{r}\left(  G\right)  $ for the number of $r$-cliques of a graph $G$
and let us outline the connection of Theorem \ref{mainTh} to $k_{r}\left(
G\right)  $. Let
\[
k_{r}\left(  n,m\right)  =\min\left\{  k_{r}\left(  G\right)  :\text{ }G\text{
has }n\text{ vertices and }m\text{ edges}\right\}  ,
\]
and suppose that $k_{r}\left(  n,m\right)  $ is attained on a graph $G$ with
adjacency matrix $A=\left(  a_{ij}\right)  .$ Clearly, for every $X\in
\binom{\left[  n\right]  }{r},$%
\[%
%TCIMACRO{\tprod \limits_{i,j\in X,\text{ }i<j}}%
%BeginExpansion
{\textstyle\prod\limits_{i,j\in X,\text{ }i<j}}
%EndExpansion
a_{ij}=\left\{
\begin{array}
[c]{ll}%
1, & \text{if }X\text{ induces an }r\text{-clique in }G,\\
0, & \text{otherwise.}%
\end{array}
\right.  .
\]
Hence, letting $\mathbf{x}=\left(  1/n,\ldots,1/n\right)  ,$ we see that
\[
L_{1}\left(  A,\mathbf{x}\right)  =1,\text{ }L_{2}\left(  A,\mathbf{x}\right)
=m/n^{2},\text{\ and }L_{r}\left(  A,\mathbf{x}\right)  =k_{r}\left(
G\right)  /n^{r};
\]
thus Theorem \ref{mainTh} gives
\[
k_{r}\left(  n,m\right)  \geq\varphi_{r}\left(  n,m/n^{2}\right)
n^{r}=\varphi_{r}\left(  m/n^{2}\right)  n^{r}.
\]
Setting $s=\xi\left(  m/n^{2}\right)  =\left\lceil 1/\left(  1-2m/n^{2}%
\right)  \right\rceil ,$ we obtain an explicit form of this inequality%
\begin{equation}
k_{r}\left(  n,m\right)  \geq\binom{s}{r}\frac{1}{s^{r}}\left(  n-\left(
r-1\right)  \sqrt{n^{2}-\frac{2sm}{s-1}}\right)  \left(  n+\sqrt{n^{2}%
-\frac{2sm}{s-1}}\right)  ^{r-1}. \label{minc}%
\end{equation}

Inequality (\ref{minc}) turns out to be rather tight, as stated below and
proved in Section \ref{apx}.

\begin{theorem}
\label{thmeq}
\[
k_{r}\left(  n,m\right)  <\varphi_{r}\left(  \frac{m}{n^{2}}\right)
n^{r}+\frac{n^{r}}{n^{2}-2m}.
\]

\end{theorem}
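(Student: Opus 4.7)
The plan is to exhibit a graph $G$ on $n$ vertices and $m$ edges with $k_r(G) < \varphi_r(c) n^r + n^r/(n^2 - 2m)$, where $c = m/n^2$, $s = \xi(c)$, and $(x,y)$ are the numbers in \eqref{sol}. The extremal configuration $(A_s, \mathbf{x}_c)$ points to a complete $s$-partite graph $G_0$ whose classes approximate the weights $(x, x, \ldots, x, y)$, supplemented by a few edges inside the smallest class so that $|E(G)| = m$ exactly.

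First I would pick integers $n_1, \ldots, n_s$ with $\sum n_i = n$, $n_i \in \{\lfloor nx \rfloor, \lceil nx \rceil\}$ for $i < s$, and $n_s \in \{\lfloor ny \rfloor, \lceil ny \rceil\}$, and let $G_0$ be the complete $s$-partite graph on a partition with these class sizes. Setting $\epsilon_i = n_i - nx$ for $i < s$ and $\epsilon_s = n_s - ny$, each $|\epsilon_i| \le 1$ and $\sum_i \epsilon_i = 0$. A Taylor expansion of $e_r(n_1,\ldots,n_s) = \sum_{X \in \binom{[s]}{r}} \prod_{i \in X} n_i$ about $(nx, \ldots, nx, ny)$ has constant term $\varphi_r(c) n^r$; by the symmetry of the first $s-1$ coordinates and $\sum \epsilon_i = 0$, the first-order term collapses to $\binom{s-2}{r-2} (nx)^{r-2}\, t\, \epsilon_s$, where $t = \sqrt{n^2 - 2sm/(s-1)} = n(x-y)$, and, since $e_r$ is multilinear in each $n_i$, the second-order remainder is $O(s n^{r-2})$ under $|\epsilon_i| \le 1$.

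Next I would add a matching $M_0$ of size $j = m - e(G_0)$ inside $V_s$, yielding $G$ with $|E(G)| = m$; no $r$-clique of $G$ uses two edges of $M_0$ (two such edges span four vertices of $V_s$ which is otherwise independent), so
\[
k_r(G) = \sum_{X \in \binom{[s]}{r}} \prod_{i \in X} n_i \;+\; j \sum_{Y \in \binom{[s-1]}{r-2}} \prod_{i \in Y} n_i,
\]
and the matching contribution equals $j \binom{s-1}{r-2} (nx)^{r-2} = O(j\, n^{r-2})$.

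The main obstacle is controlling the deficit $j$. Among partitions with $|\epsilon_i| \le 1$, the edge counts are spaced by $\Theta(t+1)$, so a naive rounding only guarantees $j = O(t)$, and the matching correction $O(t\, n^{r-2})$ can overshoot the target $\asymp s n^{r-2}$ when $s$ is small. To tighten this I would use vertex swaps between two of the first $s-1$ classes of equal size, each of which alters the edge count by exactly $\pm 1$; performing $O(s)$ such swaps keeps $\sum \epsilon_i^2 = O(s)$ (and hence preserves the second-order estimate) while rendering the reachable edge counts dense enough to secure $j = O(s)$. The corner cases $s = 2$, $n_s = 0$, and $nx,ny$ close to integers would require separate verification. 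Once $j = O(s)$ is achieved, summing the first-order rounding $O(n^{r-2}\, t)$, the second-order remainder $O(s n^{r-2})$, and the matching contribution $O(s n^{r-2})$ yields $k_r(G) - \varphi_r(c) n^r = O(s n^{r-2}) = O(n^r/(n^2 - 2m))$, and careful bookkeeping of the constants for $r \in \{3,4\}$ gives the claimed strict inequality.
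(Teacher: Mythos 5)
Your approach is genuinely different from the paper's. The paper does not fix a single partition near $(nx,\ldots,nx,ny)$: it constructs a one-parameter family $H_i$ of complete $s$-partite graphs (small class of size exactly $i$, the remaining $s-1$ classes balanced on $n-i$ vertices), interpolates between consecutive $H_i$ by adding a star of edges incident to the vertex just moved into the small class (so $k_r(H(n,m))$ is \emph{exactly} piecewise linear in $m$, Claim \ref{cl9}), bounds $k_r(H_i)\le f(m+(s-1)/8)$ at the breakpoints (Claim \ref{cl13}), and finishes with the concavity of $f$. The whole point of that design is to sidestep the integrality of $ny$, which your plan does not.

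Here is the concrete gap. After collapsing the first-order Taylor term with the matching correction, what remains is not $O(sn^{r-2})$ but roughly $-\epsilon_s\,t\,(nx)^{r-2}\binom{s-2}{r-3}$, where $t=n(x-y)=\sqrt{n^2-2sm/(s-1)}$; for $r=3$ this is $-\epsilon_s\,t\,(nx)$. Generically $|\epsilon_s|=\Theta(1)$ (there is no reason $ny$ should be near an integer), and at the lower end of the admissible range, $c\downarrow(s-2)/(2(s-1))$, one has $t\sim n/(s-1)$, so this residual is of order $n^{r-1}/s^{r-1}$. The budget $n^r/(n^2-2m)$ is only $\Theta(s n^{r-2})$ here, so the residual overshoots it as soon as $n\gg s^r$. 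Your last sentence, lumping ``the first-order rounding $O(n^{r-2}t)$'' into an $O(sn^{r-2})$ total, implicitly assumes $t=O(s)$, which fails badly. The vertex-swap fix you propose only redistributes mass among the $s-1$ large classes and therefore leaves $\epsilon_s$ (and hence this dominant residual) unchanged; similarly, flipping the sign of $\epsilon_s$ just trades a positive first-order term for a negative deficit and does not cancel the $\Theta(tn^{r-2})$ leftover. So the proof as written does not close, and the missing ingredient is exactly the parameterize-by-the-integer-class-size-and-use-concavity device of the paper.
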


Note, in particular, that if $m<\left(  1/2-\varepsilon\right)  n^{2},$ then
\[
k_{r}\left(  n,m\right)  <\varphi_{r}\left(  m/n^{2}\right)  n^{r}%
+n^{r-2}/2\varepsilon,
\]
so the order of the error is lower than expected.

\subsubsection*{Known previous results}

For $n^{2}/4\leq m\leq n^{2}/3$ inequality (\ref{minc}) was first proved by
Fisher \cite{Fis92}. He showed that
\[
k_{3}\left(  n,m\right)  \geq\frac{9nm-2n^{3}-2\left(  n^{2}-3m\right)
^{3/2}}{27}=\varphi_{3}\left(  m/n^{2}\right)  n^{3},
\]
but did not discuss how close the two sides of this inequality are.

Recently Razborov \cite{Raz06} showed that for every fixed $c\in\left(
0,1/2\right)  ,$
\[
k_{3}\left(  n,\left\lceil cn^{2}\right\rceil \right)  =\varphi_{3}\left(
c\right)  n^{3}+o\left(  n^{3}\right)  .
\]
Unfortunately, his approach, based on \cite{Raz05}, provides no clues
whatsoever how large the $o\left(  n^{3}\right)  $ term is; in particular, in
his approach this term is not uniformly bounded when $c$ approaches $1/2.$ In
\cite{Raz06} Razborov challenged the mathematical community to prove that
$k_{r}\left(  n,\left\lceil cn^{2}\right\rceil \right)  =\varphi_{r}\left(
c\right)  n^{r}+o\left(  n^{r}\right)  $ for $r>3$. Our Theorem \ref{mainTh}
proves this equality for $r=4.$

\section{Proof of Theorem \ref{mainTh}}

The following simple lemma will be used in the proof of Theorem \ref{mainTh}.

\begin{lemma}
\label{le1}Let $0\leq c\leq a$ and $0\leq d\leq b.$ If $0\leq x\leq\min\left(
a,b\right)  $ and $0\leq y\leq\min\left(  c,d\right)  ,$ then
\[
\left(  a-c\right)  \left(  b-d\right)  +x\left(  c+d\right)  +y\left(
a+b\right)  -\left(  x+y\right)  ^{2}\geq0
\]

\end{lemma}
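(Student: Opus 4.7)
The plan is to absorb the quadratic penalty $-(x+y)^{2}$ by switching to the non-negative slack variables
\[
p=a-x,\quad q=b-x,\quad r=c-y,\quad s=d-y,
\]
which are all $\ge 0$ by the hypotheses $x\le\min(a,b)$ and $y\le\min(c,d)$. Writing $p-r=(a-c)-(x-y)$ and $q-s=(b-d)-(x-y)$ and bookkeeping the $x^{2}$, $xy$, $y^{2}$ coefficients, the identity
\[
(a-c)(b-d)+x(c+d)+y(a+b)-(x+y)^{2}=(p-r)(q-s)+x(p+q)+y(r+s)
\]
should drop out; the key algebraic miracle is that $(x-y)^{2}-2x^{2}-2y^{2}=-(x+y)^{2}$, so the unpleasant negative piece gets folded entirely into the single product $(p-r)(q-s)$. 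After this rewrite the lemma reduces to showing $E:=(p-r)(q-s)+x(p+q)+y(r+s)\ge 0$.

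The remaining hypotheses $a\ge c$ and $b\ge d$ translate to $p-r\ge y-x$ and $q-s\ge y-x$. From here I would split cases on the signs of $p-r$ and $q-s$. If both are non-negative, all three summands of $E$ are non-negative. If both are negative, then $(p-r)(q-s)\ge 0$ while the other two summands are again non-negative. Either way $E\ge 0$ is immediate.

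The only delicate case is when $p-r$ and $q-s$ have opposite signs, say $p<r$ and $q\ge s$ (the other mixed case follows by the symmetry $(p,r)\leftrightarrow(q,s)$). Then $p-r\ge y-x$ together with $y\ge 0$ forces $x\ge r-p>0$, while $s\ge 0$ gives $q-s\le q$. Multiplying the non-negative inequalities $r-p\le x$ and $q-s\le q$ yields $(r-p)(q-s)\le xq$, so
\[
E=(p-r)(q-s)+x(p+q)+y(r+s)\ge -xq + xq = 0,
\]
as required. I expect the one real obstacle to be spotting the correct substitution that concentrates the negative quadratic into a single product; once the identity is in hand the case analysis is elementary and uses the boundary conditions $a\ge c$, $b\ge d$ in a completely transparent way.
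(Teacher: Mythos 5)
Your proof is correct, and it follows a genuinely different route from the paper's. The substitution $p=a-x$, $q=b-x$, $r=c-y$, $s=d-y$ does indeed yield the identity
\[
(a-c)(b-d)+x(c+d)+y(a+b)-(x+y)^{2}=(p-r)(q-s)+x(p+q)+y(r+s),
\]
which I verified directly by expansion; the cancellation $(x-y)^{2}-2x^{2}-2y^{2}=-(x+y)^{2}$ works exactly as you say, so the negative quadratic is absorbed into a single product of differences. Your case analysis on the signs of $p-r$ and $q-s$ is then complete: when the signs agree all three summands of $E$ are non-negative, and in the mixed case $p<r$, $q\ge s$ the chain $(r-p)(q-s)\le xq$ (valid because $0\le r-p\le x$ and $0\le q-s\le q$) gives $E\ge -xq+x(p+q)+y(r+s)=xp+y(r+s)\ge 0$ -- you compressed this to ``$-xq+xq=0$'', which is slightly terse but correct. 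The symmetry $(p,r)\leftrightarrow(q,s)$ fixing $E$ and swapping the two mixed cases, together with the availability of both hypotheses $a\ge c$ and $b\ge d$, takes care of the remaining case. By contrast the paper's argument first reduces to $P:=x(c+d)+y(a+b)-(x+y)^{2}<0$, normalizes by symmetry to $a\ge b$, splits on whether $x+y\le b$, and in the second branch splits again on $c$ versus $d$, each time producing a minimized value of $P$ that is offset by a rearrangement of $(a-c)(b-d)$. Your version is shorter and more transparent, and it exposes the structural reason the inequality holds (a single product of slacks dominates the quadratic penalty); the paper's version avoids any substitution but pays for it with more elaborate casework.
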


\begin{proof}
Set $P=x\left(  c+d\right)  +y\left(  a+b\right)  -\left(  x+y\right)  ^{2}.$
Since $\left(  a-c\right)  \left(  b-d\right)  \geq0,$ we may and shall
suppose that $P<0.$ By symmetry, we also suppose that $a\geq b.$ If $x+y\leq
b,$ by $c+d\leq a+b$ we have
\[
P\geq\left(  x+y\right)  \left(  c+d\right)  +y\left(  a+b-c-d\right)
-\left(  x+y\right)  ^{2}\geq\left(  x+y\right)  \left(  c+d\right)  -\left(
x+y\right)  ^{2};
\]
hence, $P<0$ implies that $b>c+d$ and $P\geq b\left(  c+d\right)  -b^{2}$. Now
the proof is completed by
\[
\left(  a-c\right)  \left(  b-d\right)  +b\left(  c+d\right)  -b^{2}=\left(
a-b\right)  \left(  b-d\right)  +cd>0.
\]
If $x+y>b,$ by $c+d\leq a+b,$ we have%
\[
P\geq b\left(  c+d\right)  +y\left(  a+b\right)  -\left(  b+y\right)
^{2}=b\left(  c+d\right)  +y\left(  a-b\right)  -b^{2}-y^{2};
\]
hence, $P<0$ implies that $\min\left(  c,d\right)  >a-b$ and
\[
P\geq b\left(  c+d\right)  +\min\left(  c,d\right)  \left(  a-b\right)
-b^{2}-\left(  \min\left(  c,d\right)  \right)  ^{2}.
\]
If $d\geq c,$ we get%
\begin{align*}
\left(  a-c\right)  \left(  b-d\right)  +P &  \geq\left(  a-c\right)  \left(
b-d\right)  -b\left(  b-d\right)  +c\left(  a-c\right)  \\
&  \geq\left(  a-c\right)  \left(  b-d\right)  -b\left(  b-d\right)  +c\left(
b-d\right)  =\left(  a-b\right)  \left(  b-d\right)  \geq0.
\end{align*}
If $c\geq d,$ we get%
\begin{align*}
\left(  a-c\right)  \left(  b-d\right)  +P  & \geq\left(  a-c\right)  \left(
b-d\right)  +b\left(  c+d\right)  +d\left(  a-b\right)  -b^{2}-d^{2}\\
& =a\left(  a-b\right)  +c\left(  c-d\right)  \geq0,
\end{align*}
completing the proof of Lemma \ref{le1}.
\end{proof}

Next we show that $\varphi_{r}\left(  n,c\right)  $ increases in $c$ whenever
$\varphi_{r}\left(  n,c\right)  >0.$

\begin{proposition}
\label{pro1}Let $c\in\left(  0,1/2\right)  $ and $3\leq r\leq\xi\left(
c\right)  \leq n.$ If $\varphi_{r}\left(  n,c\right)  >0$ and $0<c_{0}<c,$
then $\varphi_{r}\left(  n,c\right)  >\varphi_{r}\left(  n,c_{0}\right)  .$
\end{proposition}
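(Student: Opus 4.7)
The plan is to exploit the homogeneity properties of the forms $L_s$ in the entries of $A$. Specifically, $L_1$ does not involve $A$ at all, $L_2$ is linear in the entries of $A$, and $L_r$ is a polynomial of degree $\binom{r}{2}$ in those entries. This asymmetry means that a single rescaling of $A$ (leaving $\mathbf{x}$ untouched) shifts $c$ linearly while shrinking $L_r$ by a strictly larger factor.

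First, by compactness of $\mathcal{S}_k(c)$ for each of the finitely many $k$ with $r\le k\le n$, together with the continuity of $L_r$, the infimum defining $\varphi_r(n,c)$ is attained at some $(A,\mathbf{x})\in\mathcal{S}_k(c)$. I would then set $\alpha=c_0/c\in(0,1)$ and consider the scaled pair $(\alpha A,\mathbf{x})$. Since the off-diagonal entries of $A$ lie in $[0,1]$ with zero diagonal, the same is true of $\alpha A$, so $\alpha A\in\mathcal{A}(k)$. Because $L_1$ is independent of $A$, $L_1(\alpha A,\mathbf{x})=1$, and linearity of $L_2$ in $A$ gives $L_2(\alpha A,\mathbf{x})=\alpha c=c_0$, so $(\alpha A,\mathbf{x})\in\mathcal{S}_k(c_0)$. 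Homogeneity of degree $\binom{r}{2}\ge 3$ in $A$ then yields
\[
L_r(\alpha A,\mathbf{x})=\alpha^{\binom{r}{2}}L_r(A,\mathbf{x})=\alpha^{\binom{r}{2}}\varphi_r(n,c)<\varphi_r(n,c),
\]
where strictness uses both $\alpha<1$ and the hypothesis $\varphi_r(n,c)>0$. Therefore $\varphi_r(n,c_0)\le L_r(\alpha A,\mathbf{x})<\varphi_r(n,c)$, proving the proposition.

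There is essentially no serious obstacle: once one notices that $A$ can be rescaled independently of $\mathbf{x}$ without disturbing $L_1=1$, the whole argument reduces to a one-line homogeneity computation. The only point demanding any care is invoking compactness of $\mathcal{S}_k(c)$ to obtain an actual extremizer (rather than a minimizing sequence), so that the strict inequality above is available at the optimum itself.
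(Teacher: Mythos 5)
Your proof is correct and essentially identical to the paper's: both fix an extremizer $(A,\mathbf{x})\in\mathcal{S}_k(c)$, rescale $A$ by $\alpha=c_0/c$ to land in $\mathcal{S}_k(c_0)$, and use homogeneity of degree $\binom{r}{2}$ in $A$ together with $\alpha<1$ and $\varphi_r(n,c)>0$ to get the strict inequality. The only difference is that you spell out the compactness argument for existence of the minimizer, which the paper leaves implicit.
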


\begin{proof}
Suppose that
\[
\xi\left(  c\right)  \leq k\leq n,\text{\ }\left(  A,\mathbf{x}\right)
\in\mathcal{S}_{k}\left(  c\right)  ,\text{ and \ }\varphi_{r}\left(
n,c\right)  =L_{r}\left(  A,\mathbf{x}\right)  .
\]
Setting $\alpha=c_{0}/c,$ we see that $\alpha A\in\mathcal{A}\left(  k\right)
$ and
\[
L_{2}\left(  \alpha A,\mathbf{x}\right)  =\alpha L_{r}\left(  A,\mathbf{x}%
\right)  =c_{0};
\]
thus $\left(  \alpha A,\mathbf{x}\right)  \in\mathcal{S}_{k}\left(
c_{0}\right)  .$ Hence we obtain
\[
\varphi_{r}\left(  n,c\right)  =L_{r}\left(  A,\mathbf{x}\right)
=\alpha^{-\binom{r}{2}}L_{r}\left(  \alpha A,\mathbf{x}\right)  >L_{r}\left(
\alpha A,\mathbf{x}\right)  \geq\varphi_{r}\left(  n,c_{0}\right)  ,
\]
completing the proof of Proposition \ref{pro1}.
\end{proof}

\subsection*{Proof of Theorem \ref{mainTh}}

Let us first define a set of $n$-vectors $\mathcal{X}\left(  n\right)  $ by%
\[
\mathcal{X}\left(  n\right)  =\left\{  \left(  x_{1},\ldots,x_{n}\right)
:x_{1}+\cdots+x_{n}=1\text{ and }x_{i}\geq0,\text{ }1\leq i\leq n\right\}  .
\]
Now the conditions $\mathbf{x}\in\mathcal{X}\left(  n\right)  $ is equivalent
to $\mathbf{x}\geq0$ and $L_{1}\left(  A,\mathbf{x}\right)  =1.$

Assume for a contradiction that the theorem fails: let
\begin{equation}
c\in\left(  0,1/2\right)  ,\text{ }3\leq r\leq\xi\left(  c\right)  \leq
n,\text{\ }A=\left(  a_{ij}\right)  ,\text{\ }\mathbf{x}=\left(  x_{1}%
,\ldots,x_{n}\right)  ,\text{\ and\ }\left(  A,\mathbf{x}\right)
\in\mathcal{S}_{n}\left(  c\right)  \label{eq6}%
\end{equation}
be such that
\begin{equation}
\varphi_{r}\left(  n,c\right)  =L_{r}\left(  A,\mathbf{x}\right)  <\varphi
_{r}\left(  c\right)  . \label{eq2}%
\end{equation}
Assume that $n$ is the minimum integer with this property for all $c\in\left(
0,1/2\right)  ,$ and that, among all pairs $\left(  A,\mathbf{x}\right)
\in\mathcal{S}_{n}\left(  c\right)  ,$ $A$ has the maximum number of zero
entries. Hereafter we shall refer to this assumption as the \textquotedblleft
main assumption\textquotedblright. The most important consequence of the main
assumption is the following

\begin{claim}
\label{cl0}If $\left(  A,\mathbf{y}\right)  \in\mathcal{S}_{n}\left(
c\right)  $ and $\varphi_{r}\left(  n,c\right)  =L_{r}\left(  A,\mathbf{y}%
\right)  ,$ then $\mathbf{y}$ has no zero entries.$\hfill\square$
\end{claim}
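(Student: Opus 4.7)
The plan is to argue by contradiction: I assume some coordinate of $\mathbf{y}$, say $y_n$ (after relabeling), equals $0$, and I produce a smaller instance in $\mathcal{S}_{n-1}(c)$ with exactly the same value of $L_r$. Then I collide with either Proposition \ref{pro0} or the minimality of $n$ in the main assumption, depending on the size of $n$ relative to $\xi(c)$.

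Concretely, I would let $A'$ be the $(n-1)\times(n-1)$ principal submatrix of $A$ obtained by deleting row and column $n$, and set $\mathbf{y}'=(y_1,\ldots,y_{n-1})$. Since every monomial in $L_s(A,\mathbf{y})$ that contains the index $n$ picks up the factor $y_n=0$, the contributions of those terms vanish, so
\[
L_1(A',\mathbf{y}')=1,\quad L_2(A',\mathbf{y}')=c,\quad L_r(A',\mathbf{y}')=L_r(A,\mathbf{y})=\varphi_r(n,c).
\]
Clearly $A'\in\mathcal{A}(n-1)$ and $\mathbf{y}'\geq 0$, so $(A',\mathbf{y}')\in\mathcal{S}_{n-1}(c)$.

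Now two cases arise. If $n=\xi(c)$, then $n-1<\xi(c)=\lceil 1/(1-2c)\rceil$, so Proposition \ref{pro0} forces $\mathcal{S}_{n-1}(c)=\varnothing$, contradicting the existence of $(A',\mathbf{y}')$. If $n>\xi(c)$, then $r\leq \xi(c)\leq n-1$ by (\ref{eq6}), so $\varphi_r(n-1,c)$ is defined and
\[
\varphi_r(n-1,c)\;\leq\;L_r(A',\mathbf{y}')\;=\;\varphi_r(n,c)\;<\;\varphi_r(c),
\]
contradicting the minimality of $n$ in the main assumption. Either way we reach a contradiction, which forces $\mathbf{y}$ to have no zero entries. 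I do not expect to need the \emph{maximum-number-of-zeros} half of the main assumption here; that part will presumably be used in later claims that compare distinct minimizing pairs sharing the same $A$. The only care to take is the index check $r\leq \xi(c)\leq n-1$ in the second case, which is immediate.
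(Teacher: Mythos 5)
Your proof is correct and supplies exactly the argument the paper leaves implicit (the claim is stated without proof, as ``the most important consequence of the main assumption''): deleting the zero coordinate yields a pair in $\mathcal{S}_{n-1}(c)$ with the same $L_r$-value, which contradicts Proposition \ref{pro0} when $n=\xi(c)$ and the minimality of $n$ when $n>\xi(c)$. You are also right that only the minimality-of-$n$ half of the main assumption is needed here; for completeness one could note that $\varphi_r(n-1,c)\geq\varphi_r(n,c)$ automatically (the minimum is over fewer $k$), so your inequality $\varphi_r(n-1,c)\leq L_r(A',\mathbf{y}')$ is in fact an equality and $(A',\mathbf{y}')$ itself attains $\varphi_r(n-1,c)$, making the contradiction with minimality of $n$ immediate.
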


Next we introduce some notation and conventions to simplify the presentation.
For short,\ for every $i,j,\ldots,k\in\left[  n\right]  ,$ set%
\[
C_{i}=\frac{\partial L_{2}\left(  A,\mathbf{x}\right)  }{\partial x_{i}%
},\text{ \ \ }C_{ij}=\frac{\partial L_{2}\left(  A,\mathbf{x}\right)
}{\partial x_{i}\partial x_{j}},\text{ \ \ }D_{ij\ldots k}=\frac{\partial
L_{r}\left(  A,\mathbf{x}\right)  }{\partial x_{i}\partial x_{j}\cdots\partial
x_{k}},
\]
and note that
\begin{equation}
C_{ij}=a_{ij},\text{ \ \ and \ \ }\frac{\partial L_{r}\left(  A,\mathbf{x}%
\right)  }{\partial a_{ij}}a_{ij}=D_{ij}x_{i}x_{j}. \label{deq}%
\end{equation}

Letting $\mathbf{y}=\left(  x_{1}+\Delta_{1},\ldots,x_{n}+\Delta_{n}\right)
,$ Taylor's formula gives
\begin{equation}
L_{2}\left(  A,\mathbf{y}\right)  -L_{2}\left(  A,\mathbf{x}\right)  =%
%TCIMACRO{\dsum \limits_{i=1}^{n}}%
%BeginExpansion
{\displaystyle\sum\limits_{i=1}^{n}}
%EndExpansion
C_{i}\Delta_{i}+\sum\limits_{1\leq i<j\leq n}C_{ij}\Delta_{i}\Delta_{j}
\label{Tay2}%
\end{equation}
and%
\begin{equation}
L_{r}\left(  A,\mathbf{y}\right)  -L_{r}\left(  A,\mathbf{x}\right)  =%
%TCIMACRO{\dsum \limits_{s=1}^{r}}%
%BeginExpansion
{\displaystyle\sum\limits_{s=1}^{r}}
%EndExpansion
\text{ }%
%TCIMACRO{\dsum \limits_{1\leq i_{1}<\cdots<i_{s}\leq n}}%
%BeginExpansion
{\displaystyle\sum\limits_{1\leq i_{1}<\cdots<i_{s}\leq n}}
%EndExpansion
D_{i_{1}\ldots i_{s}}\Delta_{i_{1}}\cdots\text{ }\Delta_{i_{s}}. \label{Tayr}%
\end{equation}

We shall use extensively Lagrange multipliers. Since $\mathbf{x}>0,$ by
Lagrange's method, there exist $\lambda$ and $\mu$ such that
\begin{equation}
D_{i}=\lambda C_{i}+\mu\label{lag1}%
\end{equation}
for all $i\in\left[  n\right]  $. Likewise, if $0<a_{ij}<1,$ we have%
\[
\frac{\partial L_{r}\left(  A,\mathbf{x}\right)  }{\partial a_{ij}}%
=\lambda\frac{\partial L_{2}\left(  A,\mathbf{x}\right)  }{\partial a_{ij}%
}=\lambda x_{i}x_{j},
\]
and so, in view of (\ref{deq}),
\begin{equation}
D_{ij}=\lambda a_{ij}\ \ \text{whenever \ }0<a_{ij}<1. \label{lag2}%
\end{equation}

The rest of the proof is presented in a sequence of formal claims. First we
show that $\varphi_{r}\left(  n,c\right)  $ is attained on a $\left(
0,1\right)  $-matrix $A$.$^{{}}$

\begin{claim}
\label{cl00}Let $\left(  A,\mathbf{x}\right)  \in\mathcal{S}_{n}\left(
c\right)  $ satisfy (\ref{eq6}) and (\ref{eq2}), and suppose that $A$ has the
smallest number of entries $a_{ij}$ such that $0<a_{ij}<1$. Then $A$ is a
$\left(  0,1\right)  $-matrix.
\end{claim}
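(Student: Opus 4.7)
Argue by contradiction: suppose $A$ has at least one fractional entry $a_{ij}\in(0,1)$. My goal is to produce another pair $(A',\mathbf{x})\in\mathcal{S}_n(c)$ with $L_r(A',\mathbf{x})\le L_r(A,\mathbf{x})$ and $A'$ having strictly fewer fractional entries than $A$; strict inequality will contradict the minimality of $\varphi_r(n,c)$ in (\ref{eq2}), while equality will contradict the extremal choice that $A$ minimises the fractional count.

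The main device is a two-entry perturbation inside $A$. Assume first that $A$ has a second fractional entry $a_{kl}$ with $\{k,l\}\neq\{i,j\}$ (the two pairs may share a vertex). Set $\mu = x_ix_j/(x_kx_l)$ and let $A(t)$ be the matrix obtained by replacing $a_{ij}$ with $a_{ij}+t$ and $a_{kl}$ with $a_{kl}-\mu t$, keeping $\mathbf{x}$ and all other entries unchanged. Since $L_1$ is independent of $A$ and $L_2$ is linear in each $a_{uv}$, this segment exactly preserves both constraints, so $(A(t),\mathbf{x})\in\mathcal{S}_n(c)$ whenever $a_{ij}(t),a_{kl}(t)\in[0,1]$. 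Because $L_r$ is bilinear in $(a_{ij},a_{kl})$, Taylor expansion combined with the Lagrange identity (\ref{lag2}) --- applicable since both entries are strictly interior --- kills the linear-in-$t$ coefficient and leaves
\begin{equation*}
L_r(A(t),\mathbf{x})-L_r(A,\mathbf{x}) \;=\; -\mu\,\delta\,t^2, \qquad \delta := \frac{\partial^2 L_r}{\partial a_{ij}\,\partial a_{kl}}.
\end{equation*}
The coefficient $\delta$ is a weighted count of $r$-sets containing $\{i,j,k,l\}$, hence non-negative. If $\delta>0$, either sign of a small nonzero $t$ strictly decreases $L_r$, contradicting minimality. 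If $\delta=0$, then $L_r$ is constant along the entire admissible segment, and pushing $|t|$ until one of $a_{ij}(t),a_{kl}(t)$ reaches $\{0,1\}$ yields a pair in $\mathcal{S}_n(c)$ with the same $L_r$-value and one fewer fractional entry --- again a contradiction.

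The remaining case, when $A$ has exactly one fractional entry $a_{ij}$, is the main obstacle: there is no second fractional entry available to absorb the $L_2$-change, so the two-entry trick breaks down. Here one must perturb $a_{ij}$ together with two coordinates $x_k,x_l$ of $\mathbf{x}$, using $\Delta x_l=-\Delta x_k$ to preserve $L_1=1$ and coupling $\Delta x_k$ to the change in $a_{ij}$ through $L_2=c$. The Lagrange relations (\ref{lag1}) and (\ref{lag2}) again force the first-order change in $L_r$ to vanish, so the whole argument has to be closed at second order. I expect the hard work to lie in selecting $k,l$ so that the second-order coefficient is non-positive --- exploiting the fact that every other entry of $A$ is $0$ or $1$, together with the KKT inequalities at the boundary entries --- producing either a strict decrease in $L_r$ (contradicting minimality) or a zero-change curve along which $a_{ij}$ can be driven to $\{0,1\}$ (contradicting the minimal fractional count). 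The two-entry argument handles the bulk of the claim, but this single-fractional-entry reduction is where the technical effort will be concentrated.
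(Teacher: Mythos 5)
Your two-entry perturbation is correct as far as it goes, and it is a genuinely different device from the paper's: the paper never touches two matrix entries at once. Your argument cleanly disposes of the case where $A$ has two or more fractional entries, using multilinearity of $L_r$ in the $a_{uv}$ and the sign of $\delta=\partial^{2}L_r/\partial a_{ij}\partial a_{kl}\ge 0$ to force either a strict drop in $L_r$ or a zero-derivative segment to a boundary. That part is sound.

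The gap is the single-fractional-entry case, which you explicitly leave open, and your sketch of how to close it points in the wrong direction. The paper does not split the argument by the number of fractional entries at all; it perturbs the fractional entry $a_{ij}$ jointly with the \emph{matching} coordinates $x_i$ and $x_j$ (not some freely chosen $x_k,x_l$), setting $\Delta x_i=\alpha$, $\Delta x_j=-\alpha$, and replacing $a_{ij}$ by $a_{ij}+f(\alpha)$ where
\[
f(\alpha)=\frac{a_{ij}\alpha^{2}-(C_i-C_j)\alpha}{(x_i+\alpha)(x_j-\alpha)}
\]
is a \emph{nonlinear} compensation chosen so that $L_2$ is preserved exactly. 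The Lagrange relations (\ref{lag1}) and (\ref{lag2}), via $\lambda=D_{ij}/a_{ij}$, then show that the change in $L_r$ along this curve is identically zero, not merely vanishing to first order --- there is no second-order coefficient whose sign you must control, and hence no need to ``select $k,l$'' cleverly. The endgame also differs from what you anticipate: either $a_{ij}+f(\alpha)$ reaches $\{0,1\}$ for some $\alpha\in(0,x_j)$, yielding fewer fractional entries, or it stays strictly interior throughout, which forces $a_{ij}x_j=C_i-C_j$ (so $f$ extends continuously to $\alpha=x_j$), and then $\alpha=x_j$ produces a minimiser whose $j$-th coordinate is zero, contradicting Claim \ref{cl0}. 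Your sketch never invokes Claim \ref{cl0}, which is the decisive tool when the $a$-boundary is unreachable, so as written the hard case remains unproved.
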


\begin{proof}
Assume for a contradiction that $i,j\in\left[  n\right]  $ and $0<a_{ij}<1.$
By symmetry we suppose that $C_{i}\geq C_{j}.$ Let
\begin{equation}
f\left(  \alpha\right)  =\frac{a_{ij}\alpha^{2}-\left(  C_{i}-C_{j}\right)
\alpha}{\left(  x_{i}+\alpha\right)  \left(  x_{j}-\alpha\right)  },\label{eq}%
\end{equation}
and suppose that $\alpha$ satisfies%
\begin{equation}
0<\alpha<x_{j}\text{ and \ }0\leq a_{ij}+f\left(  \alpha\right)
\leq1.\label{cond0}%
\end{equation}

Let $\mathbf{y}_{\alpha}=\left(  x_{1}+\Delta_{1},\ldots,x_{n}+\Delta
_{n}\right)  ,$ where
\begin{equation}
\Delta_{i}=\alpha,\text{ \ \ }\Delta_{j}=-\alpha,\text{ \ \ and \ \ }%
\Delta_{l}=0\text{\ for }l\in\left[  n\right]  \backslash\left\{  i,j\right\}
, \label{defy}%
\end{equation}
and define the $n\times n$ matrix $B_{\alpha}=\left(  b_{ij}\right)  $ by%
\begin{equation}
b_{ij}=b_{ji}=a_{ij}+f\left(  \alpha\right)  \text{ \ \ and \ \ }b_{pq}%
=a_{pq}\text{ for }\left\{  p,q\right\}  \neq\left\{  i,j\right\}  .
\label{defB}%
\end{equation}

Note that $B_{\alpha}\in\mathcal{A}\left(  n\right)  ,$ $\mathbf{y}_{\alpha
}\in\mathcal{X}\left(  n\right)  ,$ and%
\[
L_{2}\left(  B_{\alpha},\mathbf{y}_{\alpha}\right)  -L_{2}\left(
A,\mathbf{y}_{\alpha}\right)  =f\left(  \alpha\right)  \frac{\partial
L_{2}\left(  A,\mathbf{y}_{\alpha}\right)  }{\partial a_{ij}}=f\left(
\alpha\right)  \left(  x_{i}+\alpha\right)  \left(  x_{j}-\alpha\right)  .
\]
Hence, Taylor's expansion (\ref{Tay2}) and equation (\ref{eq}) give
\begin{align*}
L_{2}\left(  B_{\alpha},\mathbf{y}_{\alpha}\right)  -L_{2}\left(
A,\mathbf{x}\right)   &  =L_{2}\left(  A,\mathbf{y}_{\alpha}\right)
-L_{2}\left(  A,\mathbf{x}\right)  +f\left(  \alpha\right)  \left(
x_{i}+\alpha\right)  \left(  x_{j}-\alpha\right) \\
&  =\left(  C_{i}-C_{j}\right)  \alpha-a_{ij}\alpha^{2}+f\left(
\alpha\right)  \left(  x_{i}+\alpha\right)  \left(  x_{j}-\alpha\right)  =0;
\end{align*}
thus $\left(  B_{\alpha},\mathbf{y}_{\alpha}\right)  \in\mathcal{S}_{n}\left(
c\right)  .$

Note also that, in view of (\ref{deq}),
\[
L_{r}\left(  B_{\alpha},\mathbf{y}_{\alpha}\right)  -L_{r}\left(
A,\mathbf{y}_{\alpha}\right)  =\frac{\partial L_{r}\left(  A,\mathbf{y}%
_{\alpha}\right)  }{\partial a_{ij}}f\left(  \alpha\right)  =f\left(
\alpha\right)  y_{i}y_{j}\frac{D_{ij}}{a_{ij}}=f\left(  \alpha\right)  \left(
x_{i}+\alpha\right)  \left(  x_{j}-\alpha\right)  \frac{D_{ij}}{a_{ij}}.
\]
Hence Taylor's expansion (\ref{Tayr}), Lagrange's conditions (\ref{lag1}) and
(\ref{lag2}), and equation (\ref{eq}) give%
\begin{align*}
L_{r}\left(  B_{\alpha},\mathbf{y}_{\alpha}\right)  -L_{r}\left(
A,\mathbf{x}\right)   &  =L_{r}\left(  A,\mathbf{y}_{\alpha}\right)
-L_{r}\left(  A,\mathbf{x}\right)  +f\left(  \alpha\right)  \left(
x_{i}+\alpha\right)  \left(  x_{j}-\alpha\right)  \frac{D_{ij}}{a_{ij}}\\
&  =\left(  D_{i}-D_{j}\right)  \alpha-D_{ij}\alpha^{2}+f\left(
\alpha\right)  \left(  x_{i}+\alpha\right)  \left(  x_{j}-\alpha\right)
\frac{D_{ij}}{a_{ij}}\\
&  =\lambda\left(  C_{i}-C_{j}\right)  \alpha-D_{ij}\alpha^{2}+f\left(
\alpha\right)  \left(  x_{i}+\alpha\right)  \left(  x_{j}-\alpha\right)
\frac{D_{ij}}{a_{ij}}\\
&  =\frac{D_{ij}}{a_{ij}}\left(  C_{i}-C_{j}\right)  \alpha-D_{ij}\alpha
^{2}+f\left(  \alpha\right)  \left(  x_{i}+\alpha\right)  \left(  x_{j}%
-\alpha\right)  \frac{D_{ij}}{a_{ij}}\\
&  =\frac{D_{ij}}{a_{ij}}\left(  \left(  C_{i}-C_{j}\right)  \alpha
-a_{ij}\alpha^{2}+a_{ij}\alpha^{2}-\left(  C_{i}-C_{j}\right)  \alpha\right)
=0.
\end{align*}

If there exists $\alpha\in\left(  0,x_{j}\right)  $ such that $a_{ij}+f\left(
\alpha\right)  =0$ or $a_{ij}+f\left(  \alpha\right)  =1,$ we see that the
matrix $B_{\alpha}$ has fewer entries belonging to $\left(  0,1\right)  $ than
$A$, contradicting the hypothesis and completing the proof. Assume therefore
that $0<a_{ij}+f\left(  \alpha\right)  <1$ for all $\alpha\in\left(
0,x_{j}\right)  .$ This condition implies that
\[
a_{ij}x_{j}=C_{i}-C_{j},
\]
for, otherwise $\lim_{\alpha\rightarrow x_{j}}\left\vert f\left(
\alpha\right)  \right\vert =\infty,$ and so, either $a_{ij}+f\left(
\alpha\right)  =0$ or $a_{ij}+f\left(  \alpha\right)  =1$ for some $\alpha
\in\left(  0,x_{j}\right)  $.

Now, extending $f\left(  \alpha\right)  $ continuously for $\alpha=x_{j}$ by
\[
f\left(  x_{j}\right)  =\lim_{\alpha\rightarrow x_{j}}f\left(  \alpha\right)
=\lim_{\alpha\rightarrow x_{j}}\frac{a_{ij}\alpha\left(  \alpha-x_{j}\right)
}{\left(  x_{i}+\alpha\right)  \left(  x_{j}-\alpha\right)  }=-\frac
{a_{ij}x_{j}}{x_{i}+x_{j}},
\]
and defining $\mathbf{y}_{x_{j}}$ by (\ref{defy}) and $B_{x_{j}}$ by
(\ref{defB}), we obtain
\[
L_{r}\left(  B_{x_{j}},\mathbf{y}_{x_{j}}\right)  -\varphi_{r}\left(
n,c\right)  =L_{r}\left(  B_{x_{j}},\mathbf{y}_{x_{j}}\right)  -L_{r}\left(
A,\mathbf{x}\right)  =0.
\]
contradicting Claim \ref{cl0} since the $j$th entry of $\mathbf{y}_{x_{j}}$ is
zero. This completes the proof of Claim \ref{cl00}.
\end{proof}

Since $A$ is a $\left(  0,1\right)  $-matrix with a zero main diagonal, it is
the adjacency matrix of some graph $G$ with vertex set $\left[  n\right]  .$
Write $E\left(  G\right)  $ for the edge set of $G$ and let us restate the
functions $L_{r}\left(  A,\mathbf{x}\right)  $ in terms of $G$. We have
\[
L_{2}\left(  A,\mathbf{x}\right)  =%
%TCIMACRO{\dsum _{ij\in E\left(  G\right)  }}%
%BeginExpansion
{\displaystyle\sum_{ij\in E\left(  G\right)  }}
%EndExpansion
x_{i}x_{j}%
\]
and more generally,
\[
L_{r}\left(  A,\mathbf{x}\right)  =%
%TCIMACRO{\dsum }%
%BeginExpansion
{\displaystyle\sum}
%EndExpansion
\left\{  x_{i_{1}}\cdots\text{ }x_{i_{r}}:\text{ the set }\left\{
i_{1},\ldots,i_{r}\right\}  \text{ induces an }r\text{-clique in }G\right\}
.
\]

To finish the proof of Theorem \ref{mainTh} we show that $G$ is a complete
graph and $L_{r}\left(  A,\mathbf{x}\right)  =\varphi_{r}\left(  c\right)  .$

\subsection*{Proof that $G$ is a complete graph}

For convenience we first outline this part of the proof. Write $\overline{G}$
for the complement of $G$ and $E\left(  \overline{G}\right)  $ for the edge
set of $\overline{G}.$ We assume that $G$ is not complete and reach a
contradiction by the following major steps:

- if $ij\in E\left(  \overline{G}\right)  ,$ then $C_{i}\neq C_{j}$ - Claim
\ref{cl1};

- if $ij\in E\left(  G\right)  ,$ then $D_{ij}<\lambda$ - Claim \ref{cl2};

- $\overline{G}$ is triangle-free - Claim \ref{cl4};

- $\overline{G}$ is bipartite - Claims \ref{cl5} and \ref{cl6};

- $G$ contains induced $4$-cycles - Claim \ref{cl7};

- $G$ contains no induced $4$-cycles - Claim \ref{cl7.1}.\bigskip

Now the details.

\begin{claim}
\label{cl1}If $ij\in E\left(  \overline{G}\right)  ,$ then $C_{i}\neq C_{j}$.
\end{claim}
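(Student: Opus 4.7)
The plan is to contradict Claim \ref{cl0} by exhibiting a minimizer of $\varphi_r(n,c)$ with a zero coordinate. Suppose for contradiction that there is an edge $ij\in E(\overline{G})$, so $a_{ij}=0$, while $C_i=C_j$. I would keep $A$ fixed and deform $\mathbf{x}$ along the one-parameter family $\mathbf{y}_\alpha$ defined by $\Delta_i=\alpha$, $\Delta_j=-\alpha$, and $\Delta_\ell=0$ for $\ell\notin\{i,j\}$, for $\alpha\in[0,x_j]$. This deformation preserves the simplex $\mathcal{X}(n)$.

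The crucial observation is that $D_{ij}=0$ whenever $a_{ij}=0$, because the coefficient of $x_ix_j$ in $L_r(A,\mathbf{x})$ picks out only those $r$-cliques of $G$ that contain the edge $ij$, and there are none. Combined with the multilinearity of $L_r$ in $\mathbf{x}$, the Taylor expansion (\ref{Tayr}) terminates after the quadratic terms when only two coordinates are perturbed, so I would compute
\[
L_r(A,\mathbf{y}_\alpha)-L_r(A,\mathbf{x})=(D_i-D_j)\alpha-D_{ij}\alpha^2=\lambda(C_i-C_j)\alpha=0,
\]
where the Lagrange identity (\ref{lag1}) and the assumption $C_i=C_j$ eliminate the linear term. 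An entirely analogous calculation using (\ref{Tay2}) yields $L_2(A,\mathbf{y}_\alpha)-L_2(A,\mathbf{x})=(C_i-C_j)\alpha-a_{ij}\alpha^2=0$, so the constraint $L_2=c$ is preserved as well.

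Consequently $(A,\mathbf{y}_\alpha)\in\mathcal{S}_n(c)$ and $L_r(A,\mathbf{y}_\alpha)=\varphi_r(n,c)$ for every $\alpha\in[0,x_j]$. Taking $\alpha=x_j$ produces a minimizer whose $j$-th coordinate is zero, directly contradicting Claim \ref{cl0} and finishing the argument. There is no real obstacle here: this is a one-step smoothing whose effectiveness comes from the fortunate coincidence that the non-edge between $i$ and $j$ forces $a_{ij}$ and $D_{ij}$ to vanish simultaneously, so the quadratic corrections in both Taylor expansions disappear in tandem, leaving only the linear Lagrange condition to be defeated by $C_i=C_j$.
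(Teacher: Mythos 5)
Your argument is correct and is essentially the paper's own proof: both transfer all mass between the two endpoints of the non-edge $ij$ so that one coordinate vanishes, and use the fact that $a_{ij}=0$ forces both quadratic Taylor corrections to vanish while the Lagrange condition (\ref{lag1}) kills the linear term, contradicting Claim \ref{cl0}. The only cosmetic difference is that you present a one-parameter family and then specialize to $\alpha=x_j$, and you make explicit the (true, and implicitly used in the paper) observation that $a_{ij}=0$ implies $D_{ij}=0$.
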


\begin{proof}
Assume that $ij\in E\left(  \overline{G}\right)  $ and $C_{i}=C_{j}.$ Let
$\mathbf{y}=\left(  x_{1}+\Delta_{1},\ldots,x_{n}+\Delta_{n}\right)  ,$ where
\[
\Delta_{i}=-x_{i},\text{ \ \ }\Delta_{j}=x_{i},\text{ \ \ and \ \ }\Delta
_{l}=0\text{\ for }l\in\left[  n\right]  \backslash\left\{  i,j\right\}  .
\]
Clearly, $\mathbf{y}\in\mathcal{X}\left(  n\right)  ;$ Taylor's expansion
(\ref{Tay2}) gives
\[
L_{2}\left(  A,\mathbf{y}\right)  -L_{2}\left(  A,\mathbf{x}\right)
=C_{j}x_{i}-C_{i}x_{i}=0;
\]
thus, $\left(  A,\mathbf{y}\right)  \in\mathcal{S}_{n}\left(  c\right)  .$
Taylor's expansion (\ref{Tayr}) and Lagrange's condition (\ref{lag1}) give\
\[
L_{r}\left(  A,\mathbf{y}\right)  -L_{r}\left(  A,\mathbf{x}\right)
=D_{j}x_{i}-D_{i}x_{i}=\mu\left(  x_{i}-x_{i}\right)  +\lambda\left(
C_{j}-C_{i}\right)  x_{i}=0,
\]
contradicting Claim \ref{cl0} as the $i$th entry of $\mathbf{y}$ is zero. The
proof of Claim \ref{cl1} is completed.
\end{proof}

\begin{claim}
\label{cl2}If $ij\in E\left(  G\right)  ,$ then $D_{ij}<\lambda.$
\end{claim}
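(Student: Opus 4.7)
The plan is to mirror the argument of Claim~\ref{cl00}, now treating $a_{ij}=1$ as a one-sided boundary value. Assume for contradiction that $ij\in E(G)$ and $D_{ij}\ge\lambda$, and by the symmetry between $i$ and $j$ suppose $C_i\ge C_j$.

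I would recycle the perturbation of Claim~\ref{cl00} verbatim: with $f(\alpha)$ given by~(\ref{eq}) specialized to $a_{ij}=1$, and $\mathbf{y}_\alpha$, $B_\alpha$ defined by~(\ref{defy})--(\ref{defB}), the choice of $f$ still forces $L_1(B_\alpha,\mathbf{y}_\alpha)=1$ and $L_2(B_\alpha,\mathbf{y}_\alpha)=c$ whenever $1+f(\alpha)\in[0,1]$. Repeating the Taylor--Lagrange bookkeeping of Claim~\ref{cl00} — but using only the interior Lagrange condition~(\ref{lag1}), since~(\ref{lag2}) is unavailable at the boundary value $a_{ij}=1$ — the cross-terms telescope and leave
\[
L_r(B_\alpha,\mathbf{y}_\alpha)-L_r(A,\mathbf{x})=(\lambda-D_{ij})(C_i-C_j)\alpha.
\]
Under the contradiction hypothesis the right-hand side is $\le 0$. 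Whenever $D_{ij}>\lambda$ and $C_i>C_j$, picking any sufficiently small $\alpha>0$ (so $1+f(\alpha)$ is still close to $1$ and hence feasible) produces a strictly smaller value of $L_r$, directly contradicting the minimality of $(A,\mathbf{x})$.

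The delicate regime is $(\lambda-D_{ij})(C_i-C_j)=0$, where the first-order change vanishes and one must exploit the ``main assumption''. As in Claim~\ref{cl00}, I would split into two sub-cases. If some $\alpha\in(0,x_j)$ satisfies $1+f(\alpha)=0$, then $(B_\alpha,\mathbf{y}_\alpha)\in\mathcal{S}_n(c)$ is another minimizer whose matrix has strictly more zero entries than $A$, contradicting the choice of $A$ as maximizing the number of zeros. Otherwise $1+f(\alpha)\in(0,1]$ throughout $(0,x_j)$, and this forces $\lim_{\alpha\to x_j^{-}}|f(\alpha)|<\infty$; the only way the pole at $\alpha=x_j$ can cancel is $x_j=C_i-C_j$, and continuous extension to $\alpha=x_j$ then yields $(B_{x_j},\mathbf{y}_{x_j})\in\mathcal{S}_n(c)$ attaining $\varphi_r(n,c)$ whose $j$th coordinate is zero, contradicting Claim~\ref{cl0}. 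The very same limit identity $x_j=C_i-C_j$ also disposes of the residual sub-case $C_i=C_j$, since it would force $x_j=0$ and once more contradict Claim~\ref{cl0}.

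The main obstacle I foresee is keeping the sign and boundedness of $f$ on $(0,x_j)$ under control — in particular verifying that the sub-case $1+f(\alpha)\in(0,1]$ genuinely forces boundedness of $f$ and removability of the singularity at $\alpha=x_j$, since the precise picture depends on whether $x_j$ lies below or above $C_i-C_j$. Once that analysis is done, every sub-case either produces a strict decrease in $L_r$ (contradicting minimality), or a minimizer with an extra zero in $A$ (contradicting the main assumption), or a minimizer with a zero coordinate of $\mathbf{x}$ (contradicting Claim~\ref{cl0}), and the strict inequality $D_{ij}<\lambda$ follows.
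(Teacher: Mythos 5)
Your proposal diverges from the paper in a key structural choice: you perturb the edge pair $\{i,j\}$ itself (and $a_{ij}$ with it), whereas the paper leaves $\mathbf{x}$ fixed on $i$ and $j$ and instead perturbs a \emph{non-edge} pair $\{p,q\}\in E(\overline G)$ together with $a_{ij}$. This difference is fatal to your argument. The reason the paper reaches for a non-edge is Claim~\ref{cl1}: for non-edges, $C_p\neq C_q$ is guaranteed, so the first-order term $\alpha(C_p-C_q)(\lambda-D_{ij})$ has a definite sign for the right orientation of $\alpha$, and also $a_{pq}=0$ kills the quadratic term in the $L_2$ expansion, so $f(\alpha)$ is \emph{linear} in $\alpha$ and stays strictly negative on $(0,x_q)$. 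For an edge $\{i,j\}$ none of this holds: $C_i=C_j$ is entirely possible (Claim~\ref{cl1} says nothing about edges, and indeed in the conjectured optimum the $s-1$ large coordinates are pairwise tied and all joined), and the quadratic term $-a_{ij}\alpha^2=-\alpha^2$ survives, making $f(\alpha)=\bigl(\alpha^2-(C_i-C_j)\alpha\bigr)/\bigl((x_i+\alpha)(x_j-\alpha)\bigr)$ non-monotone.

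Concretely, when $C_i=C_j$ you get $f(\alpha)=\alpha^2/\bigl((x_i+\alpha)(x_j-\alpha)\bigr)>0$ for every feasible $\alpha\neq0$ (in either direction), so $b_{ij}=1+f(\alpha)>1$ and $B_\alpha\notin\mathcal A(n)$: the perturbation is simply infeasible, and no contradiction ensues. Your dichotomy in the ``delicate regime'' (either $1+f$ hits $0$, or $1+f$ stays in $(0,1]$ up to $\alpha=x_j$) omits the third possibility that $1+f$ exceeds $1$, which is exactly what happens here. The proposed ``limit identity $x_j=C_i-C_j$'' only follows \emph{under} the assumption $1+f\in(0,1]$ throughout, which fails when $C_i=C_j$, so it cannot be used to ``dispose of'' that sub-case. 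A similar obstruction occurs when $D_{ij}=\lambda$ and $0<C_i-C_j<x_j$: then $f$ is negative on $(0,C_i-C_j)$, returns to $0$ at $\alpha=C_i-C_j$ where $B_\alpha=A$ and $\mathbf y_\alpha$ still has all coordinates positive, and becomes positive (infeasible) beyond — again no contradiction unless $f$ happens to reach $-1$ in between, which you cannot guarantee. In short, the first-order computation is correct, but perturbing $\{i,j\}$ itself cannot close the argument; the paper's device of moving mass across an unrelated non-edge $\{p,q\}$ is essential.
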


\begin{proof}
Assume that $ij\in E\left(  G\right)  $ and $D_{ij}\geq\lambda.$ Select $pq\in
E\left(  \overline{G}\right)  ;$ by Claim \ref{cl1} suppose that $C_{p}>C_{q}%
$. For every $\alpha\in\left(  0,x_{q}\right)  ,$ let $\mathbf{y}_{\alpha
}=\left(  y_{1},\ldots,y_{n}\right)  ,$ where
\[
y_{p}=x_{p}+\alpha,\text{ \ \ }y_{q}=x_{q}-\alpha,\text{ \ \ and \ \ }%
y_{l}=x_{l}\text{ for all }l\in\left[  n\right]  \backslash\left\{
p,q\right\}  .
\]
Let
\begin{equation}
f\left(  \alpha\right)  =\frac{\left(  C_{q}-C_{p}\right)  \alpha}{y_{i}y_{j}%
}. \label{eq3}%
\end{equation}
and define the $n\times n$ matrix $B_{\alpha}=\left(  b_{rs}\right)  $ by%
\[
b_{ij}=b_{ji}=1+f\left(  \alpha\right)  ,\text{ \ \ and \ \ }b_{rs}%
=a_{rs}\text{ for }\left\{  r,s\right\}  \neq\left\{  i,j\right\}  .
\]

For $\alpha$ sufficiently small, $-1<f\left(  \alpha\right)  <0,$ and so
$B_{\alpha}\in\mathcal{A}\left(  n\right)  $ and $\mathbf{y}_{\alpha}%
\in\mathcal{X}\left(  n\right)  .$ Taylor's expansion (\ref{Tay2}) and
equation (\ref{eq3}) give
\begin{align*}
L_{2}\left(  B_{\alpha},\mathbf{y}_{\alpha}\right)  -L_{2}\left(
A,\mathbf{x}\right)   &  =L_{2}\left(  B_{\alpha},\mathbf{y}_{\alpha}\right)
-L_{2}\left(  A,\mathbf{y}_{\alpha}\right)  +L_{2}\left(  A,\mathbf{y}%
_{\alpha}\right)  -L_{2}\left(  A,\mathbf{x}\right)  \\
&  =f\left(  \alpha\right)  y_{i}y_{j}+\alpha\left(  C_{p}-C_{q}\right)  =0;
\end{align*}
thus, $\left(  B_{\alpha},\mathbf{y}_{\alpha}\right)  \in\mathcal{S}%
_{n}\left(  c\right)  .$ 

Taylor's expansion (\ref{Tayr}), Lagrange's condition (\ref{lag1}), and
equation (\ref{eq3}) give%
\begin{align*}
L_{r}\left(  B_{\alpha},\mathbf{y}_{\alpha}\right)  -L_{r}\left(
A,\mathbf{x}\right)   &  =L_{r}\left(  B_{\alpha},\mathbf{y}_{\alpha}\right)
-L_{r}\left(  A,\mathbf{y}_{\alpha}\right)  +L_{r}\left(  A,\mathbf{y}%
_{\alpha}\right)  -L_{r}\left(  A,\mathbf{x}\right)  \\
&  =D_{p}\alpha-D_{q}\alpha+D_{ij}f\left(  \alpha\right)  y_{i}y_{j}%
=\lambda\left(  C_{p}-C_{q}\right)  \alpha-D_{ij}\left(  C_{p}-C_{q}\right)
\alpha\\
&  =\alpha\left(  C_{p}-C_{q}\right)  \left(  \lambda-D_{ij}\right)  .
\end{align*}
Since $L_{r}\left(  B_{\alpha},\mathbf{y}_{\alpha}\right)  \geq L_{r}\left(
A,\mathbf{x}\right)  ,$ $\alpha\left(  C_{p}-C_{q}\right)  >0,$ and
$D_{ij}\geq\lambda,$ we see that $L_{r}\left(  B_{\alpha},\mathbf{y}_{\alpha
}\right)  =L_{r}\left(  A,\mathbf{x}\right)  .$

If there exists $\alpha\in\left(  0,x_{q}\right)  $ such that $a_{ij}+f\left(
\alpha\right)  =0,$ then the $\left(  0,1\right)  $-matrix $B_{\alpha}$ has
more zero entries than $A,$ contradicting the main assumption. On the other
hand, if $a_{ij}+f\left(  \alpha\right)  >0$ for all $\alpha\in\left(
0,x_{q}\right)  ,$ then $q\notin\left\{  i,j\right\}  $ and the definitions of
$f\left(  \alpha\right)  ,$ $B_{\alpha},$ and $\mathbf{y}_{\alpha}$ make sense
for $\alpha=x_{q}$ as well. Letting $\alpha=x_{q},$ we obtain $y_{q}=0,$
contradicting Claim \ref{cl0} and completing the proof of Claim \ref{cl2}.
\end{proof}

\begin{claim}
\label{cl4}The graph $\overline{G}$ is triangle-free.
\end{claim}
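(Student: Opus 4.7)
The plan is to suppose $\overline{G}$ contains a triangle on vertices $i,j,k$, and then construct a perturbation supported on $\{i,j,k\}$ that preserves $L_{1}$, $L_{2}$, and $L_{r}$ but zeros out one of the coordinates $x_i, x_j, x_k$, contradicting Claim \ref{cl0}.

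The key observation is that since $\{i,j,k\}$ is an independent set in $G$, we have $a_{ij}=a_{ik}=a_{jk}=0$, and moreover no $r$-clique of $G$ contains two (or more) of $i,j,k$. Consequently every mixed partial $D_{ij}$, $D_{ik}$, $D_{jk}$, $D_{ijk}$ vanishes, and since $L_r$ is multilinear there are no pure second (or higher) partials in a single variable either. So if we let $\mathbf{y}=(x_1+\Delta_1,\ldots,x_n+\Delta_n)$ with $\Delta_\ell=0$ for $\ell\notin\{i,j,k\}$, then Taylor's formulas (\ref{Tay2}) and (\ref{Tayr}) collapse to
\[
L_2(A,\mathbf{y})-L_2(A,\mathbf{x})=C_i\Delta_i+C_j\Delta_j+C_k\Delta_k,
\]
\[
L_r(A,\mathbf{y})-L_r(A,\mathbf{x})=D_i\Delta_i+D_j\Delta_j+D_k\Delta_k.
\]
Applying Lagrange's condition (\ref{lag1}) to the latter yields
\[
L_r(A,\mathbf{y})-L_r(A,\mathbf{x})=\lambda\bigl(C_i\Delta_i+C_j\Delta_j+C_k\Delta_k\bigr)+\mu\bigl(\Delta_i+\Delta_j+\Delta_k\bigr).
\]

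Thus I impose the two linear constraints $\Delta_i+\Delta_j+\Delta_k=0$ and $C_i\Delta_i+C_j\Delta_j+C_k\Delta_k=0$. By Claim \ref{cl1} the values $C_i,C_j,C_k$ are pairwise distinct, so these two equations are independent and cut out a one-dimensional line in $\mathbb{R}^3$, parametrized by $t\mapsto t(v_1,v_2,v_3)$ for some fixed nonzero $(v_1,v_2,v_3)$ with $v_1+v_2+v_3=0$. Since the coordinates $v_p$ sum to zero and are not all zero, at least one is positive and at least one is negative; choosing the sign of $t$ appropriately and then the smallest $|t|$ at which one of $x_i+tv_1$, $x_j+tv_2$, $x_k+tv_3$ first hits $0$ produces a vector $\mathbf{y}\in\mathcal{X}(n)$ with $\mathbf{y}\ge0$ having a zero entry while all three perturbation-free coordinates remain positive.

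By construction $L_2(A,\mathbf{y})=L_2(A,\mathbf{x})=c$ and $L_r(A,\mathbf{y})=L_r(A,\mathbf{x})=\varphi_r(n,c)$, so $(A,\mathbf{y})\in\mathcal{S}_n(c)$ attains the minimum yet has a zero coordinate, directly contradicting Claim \ref{cl0}. No obstacle is expected in this argument, since the independent-set structure makes every potentially dangerous higher-order term in the Taylor expansions of $L_2$ and $L_r$ vanish; the only point demanding attention is verifying that the two linear constraints on $(\Delta_i,\Delta_j,\Delta_k)$ really are independent, which is exactly what Claim \ref{cl1} delivers.
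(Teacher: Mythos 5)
Your proof is correct and takes essentially the same approach as the paper: you perturb on the triangle's three coordinates, observe that the independent-set structure kills all second- and third-order Taylor terms, impose the two linear constraints (which force both $L_2$ and $L_r$ to be preserved via Lagrange's condition), and then scale along the resulting 1-dimensional solution line until one coordinate hits zero, contradicting Claim \ref{cl0}. The paper expresses this geometrically (intersecting a line through the origin with the feasibility triangle in the $(\alpha,\beta)$-plane after substituting $\Delta_k=-\alpha-\beta$), while you phrase it as a linear-algebra/scaling argument, but the two are equivalent.
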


\begin{proof}
Assume the assertion false and let $i,j,k\in\left[  n\right]  $ be such that
$ij,ik,jk\in E\left(  \overline{G}\right)  .$ Let the line given by
\begin{equation}
\left(  C_{i}-C_{k}\right)  x+\left(  C_{j}-C_{k}\right)  y=0 \label{eq4}%
\end{equation}
intersect the triangle formed by the lines $x=-x_{i}$, $y=-x_{j}$%
,\ $x+y=x_{k}$ at some point $\left(  \alpha,\beta\right)  .$ Let
$\mathbf{y}=\left(  x_{1}+\Delta_{1},\ldots,x_{n}+\Delta_{n}\right)  ,$ where%
\[
\Delta_{i}=\alpha,\text{ \ \ }\Delta_{j}=\beta,\text{ \ \ }\Delta_{k}%
=-\alpha-\beta,\text{ \ \ and \ \ }\Delta_{l}=0\text{ for }l\in\left[
n\right]  \backslash\left\{  i,j,k\right\}  .
\]
Clearly, $\mathbf{y}\in\mathcal{X}\left(  n\right)  ;$ Taylor's expansion
((\ref{Tay2}) and equation (\ref{eq4}) give
\[
L_{2}\left(  A,\mathbf{y}\right)  -L_{2}\left(  A,\mathbf{x}\right)
=C_{i}\alpha+C_{j}\beta-C_{k}\left(  \alpha+\beta\right)  =0;
\]
thus $\left(  A,\mathbf{y}\right)  \in\mathcal{S}_{n}\left(  c\right)  .$
Taylor's expansion (\ref{Tayr}), Lagrange's condition (\ref{lag1}), and
equation (\ref{eq4}) give
\begin{align*}
L_{r}\left(  A,\mathbf{y}\right)  -L_{r}\left(  A,\mathbf{x}\right)   &
=D_{i}\alpha+D_{j}\beta-D_{k}\left(  \alpha+\beta\right) \\
&  =\mu\left(  \alpha+\beta-\alpha-\beta\right)  +\lambda\left(  \left(
C_{i}-C_{k}\right)  \alpha+\left(  C_{j}-C_{k}\right)  \beta\right)  =0,
\end{align*}
contradicting Claim \ref{cl0} as $\mathbf{y}$ has a zero entry. The proof of
Claim \ref{cl4} is completed.
\end{proof}

Using the following claim, we shall prove that $\overline{G}$ is a specific
bipartite graph.

\begin{claim}
\label{cl5}Let the vertices $i,j,k$ satisfy $ij\in E\left(  G\right)  ,$
$ik\in E\left(  \overline{G}\right)  ,$ $jk\in E\left(  \overline{G}\right)
.$ Then
\[
\left(  C_{i}-C_{k}\right)  \left(  C_{j}-C_{k}\right)  >0.
\]

\end{claim}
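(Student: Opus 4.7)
The plan is to derive a contradiction by a three-vertex perturbation analogous to the one used in Claim \ref{cl4}, but now exploiting the edge $ij$ to generate a strictly beneficial second-order term.

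First, observe that by Claim \ref{cl1} applied to the non-edges $ik$ and $jk$ we have $C_i\neq C_k$ and $C_j\neq C_k$, so it suffices to rule out $(C_i-C_k)(C_j-C_k)<0$. By the symmetric roles of $i$ and $j$, we may assume $C_i>C_k>C_j$. I will consider perturbations $\mathbf{y}_\alpha = \mathbf{x} + (\Delta_1,\ldots,\Delta_n)$ with $\Delta_i=-\alpha,\ \Delta_j=-\beta,\ \Delta_k=\alpha+\beta$ and $\Delta_l=0$ otherwise, where $\alpha>0$ is small and $\beta=\beta(\alpha)$ is chosen to preserve $L_2$. Since $ij\in E(G)$ gives $C_{ij}=1$ while $ik,jk\in E(\overline G)$ give $C_{ik}=C_{jk}=0$, the $L_2$ constraint $L_2(A,\mathbf{y}_\alpha)=c$ reduces via (\ref{Tay2}) to
\[
(C_k-C_i)\alpha+(C_k-C_j)\beta+\alpha\beta=0,
\]
which for small $\alpha>0$ has a unique small positive solution $\beta=(C_i-C_k)\alpha/(C_k-C_j+\alpha)$, so $\beta/\alpha\to(C_i-C_k)/(C_k-C_j)>0$ as $\alpha\to 0$. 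Since $\mathbf{x}>0$ by Claim \ref{cl0}, $\mathbf{y}_\alpha\in\mathcal{X}(n)$ for small $\alpha$, so $(A,\mathbf{y}_\alpha)\in\mathcal{S}_n(c)$.

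Next I expand $L_r(A,\mathbf{y}_\alpha)-L_r(A,\mathbf{x})$ via (\ref{Tayr}); only $\Delta_i,\Delta_j,\Delta_k$ are nonzero, so only terms involving $i,j,k$ survive. The linear part collapses by Lagrange: using (\ref{lag1}) and $\Delta_i+\Delta_j+\Delta_k=0$,
\[
D_i\Delta_i+D_j\Delta_j+D_k\Delta_k=\lambda(C_i\Delta_i+C_j\Delta_j+C_k\Delta_k)=-\lambda\,\Delta_i\Delta_j=-\lambda\alpha\beta,
\]
where the second equality uses the $L_2$ identity just derived. The quadratic part is
\[
D_{ij}\alpha\beta-D_{ik}\alpha(\alpha+\beta)-D_{jk}\beta(\alpha+\beta),
\]
and the cubic part is $D_{ijk}\alpha\beta(\alpha+\beta)=O(\alpha^3)$. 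Combining,
\[
L_r(A,\mathbf{y}_\alpha)-L_r(A,\mathbf{x})=(D_{ij}-\lambda)\alpha\beta-D_{ik}\alpha(\alpha+\beta)-D_{jk}\beta(\alpha+\beta)+O(\alpha^3).
\]

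Now the key observation: $D_{ij}<\lambda$ by Claim \ref{cl2} (since $ij\in E(G)$), and $D_{ik},D_{jk}\geq 0$ since all entries of $A$ and $\mathbf{x}$ are non-negative. Because $\alpha\beta,\ \alpha(\alpha+\beta),\ \beta(\alpha+\beta)$ are all positive and of order $\alpha^2$ with positive limiting ratios, the leading $\alpha^2$ coefficient is strictly negative. Hence for sufficiently small $\alpha>0$ we get $L_r(A,\mathbf{y}_\alpha)<L_r(A,\mathbf{x})=\varphi_r(n,c)$, contradicting the definition of $\varphi_r(n,c)$ as the minimum over $\mathcal{S}_n(c)$.

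The main obstacle is the bookkeeping: one needs the edge $ij\in E(G)$ to make the $\Delta_i\Delta_j$ term appear in $L_2$ (supplying the quadratic imbalance that forces $\beta$ to be order $\alpha$ rather than $o(\alpha)$), and simultaneously to invoke Claim \ref{cl2}'s strict inequality $D_{ij}<\lambda$; without either ingredient the $\alpha^2$ coefficient could vanish or have the wrong sign. Verifying that every sign works out as claimed (in particular that the ratio $\beta/\alpha$ is positive, which uses $C_i>C_k>C_j$ and not merely $C_i\neq C_j$) is the one spot where the hypothesis $(C_i-C_k)(C_j-C_k)\leq 0$ is actually used.
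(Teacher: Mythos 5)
Your proof is correct and is essentially the paper's argument: the same three‑vertex perturbation with $\Delta_i+\Delta_j+\Delta_k=0$, constrained to lie on the hyperbola $(C_i-C_k)x+(C_j-C_k)y+xy=0$, followed by Lagrange collapse of the linear part and the key inequality $D_{ij}<\lambda$ from Claim~\ref{cl2}. The only differences are cosmetic: you assume for contradiction $C_i>C_k>C_j$ and solve for $\beta(\alpha)$ asymptotically, whereas the paper parametrizes along the branch of the hyperbola and reads off the sign of $\alpha\beta$ directly. Two small points worth noting. First, the terms $D_{ik}\alpha(\alpha+\beta)$, $D_{jk}\beta(\alpha+\beta)$, $D_{ijk}\alpha\beta(\alpha+\beta)$ that you bound as nonnegative/$O(\alpha^3)$ are in fact identically zero, because $ik$ and $jk$ are non‑edges; this is why the paper obtains the \emph{exact} identity $L_r(A,\mathbf{y})-L_r(A,\mathbf{x})=(D_{ij}-\lambda)\alpha\beta$ with no error term, and you could do the same. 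Second, your explanatory remark that the edge $ij$ is needed to ``force $\beta$ to be order $\alpha$ rather than $o(\alpha)$'' is off: $\beta$ is of order $\alpha$ from the linear part of the $L_2$ constraint alone. What the edge $ij$ actually supplies is the $\alpha\beta$ term in the $L_2$ constraint, which makes the Lagrange‑collapsed linear part of the $L_r$ difference equal $-\lambda\alpha\beta$ (rather than $0$), so that it can combine with the quadratic $D_{ij}\alpha\beta$ into the decisive factor $D_{ij}-\lambda$. Neither point affects the validity of the proof.
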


\begin{proof}
Note first that by Claim \ref{cl1} we have $C_{i}\neq C_{k}$ and $C_{j}\neq
C_{k}.$ Consider the hyperbola defined by
\begin{equation}
\left(  C_{i}-C_{k}\right)  x+\left(  C_{j}-C_{k}\right)  y+xy=0, \label{hyp}%
\end{equation}
and write $H$ for its branch containing the origin. Obviously $\left(
C_{i}-C_{k}\right)  \left(  C_{j}-C_{k}\right)  <0$ implies that $\alpha
\beta>0$ for all $\left(  \alpha,\beta\right)  \in H$.

Suppose $\left(  \alpha,\beta\right)  \in H$ is sufficiently close to the
origin and let $\mathbf{y}=\left(  x_{1}+\Delta_{1},\ldots,x_{n}+\Delta
_{n}\right)  ,$ where%
\[
\Delta_{i}=\alpha,\text{ \ \ }\Delta_{j}=\beta,\text{ \ \ }\Delta_{k}%
=-\alpha-\beta,\text{ \ \ and \ \ }\Delta_{l}=0\text{ for }l\in\left[
n\right]  \backslash\left\{  i,j,k\right\}  .
\]
Clearly, $\mathbf{y}\in\mathcal{X}\left(  n\right)  ;$ Taylor's expansion
(\ref{Tay2}) and equation (\ref{hyp}) give%
\[
L_{2}\left(  A,\mathbf{y}\right)  -L_{2}\left(  A,\mathbf{x}\right)
=C_{i}\alpha+C_{j}\beta-C_{k}\left(  \alpha+\beta\right)  +\alpha\beta=0;
\]
thus $\left(  A,\mathbf{y}\right)  \in\mathcal{S}_{n}\left(  c\right)  .$
Taylor's expansion (\ref{Tayr}), Lagrange's condition (\ref{lag1}), and
equation (\ref{hyp}) give%
\begin{align*}
L_{r}\left(  A,\mathbf{y}\right)  -L_{r}\left(  A,\mathbf{x}\right)   &
=D_{i}\alpha+D_{j}\beta-D_{k}\left(  \alpha+\beta\right)  +D_{ij}\alpha\beta\\
&  =\lambda\left(  C_{i}\alpha+C_{j}\beta-C_{k}\left(  \alpha+\beta\right)
\right)  +D_{ij}\alpha\beta=\left(  D_{ij}-\lambda\right)  \alpha\beta.
\end{align*}
Since $D_{ij}<\lambda$ and $L_{r}\left(  A,\mathbf{y}\right)  \leq
L_{r}\left(  A,\mathbf{x}\right)  ,$ we see that $\alpha\beta<0.$ Thus,
$\left(  C_{i}-C_{k}\right)  \left(  C_{j}-C_{k}\right)  >0,$ completing the
proof of Claim \ref{cl5}.
\end{proof}

\begin{claim}
\label{cl6} $\overline{G}$ is a bipartite graph and its vertex classes $U^{+}$
and $U^{-}$ can be selected so that $C_{u}>C_{w}$ for all $u\in U^{+}$ and
$w\in U^{-}$ such that $uw\in E\left(  \overline{G}\right)  .$
\end{claim}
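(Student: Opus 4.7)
The plan is to combine Claim \ref{cl4} (triangle-freeness of $\overline{G}$) with Claim \ref{cl5} to observe a strong local property: around any vertex $k$, all of its $\overline{G}$-neighbors have $C$-values strictly on the same side of $C_k$. This immediately gives a canonical two-coloring of the vertices of $\overline{G}$ of positive degree (``local $C$-max'' versus ``local $C$-min'').

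First I would fix a vertex $k$ and consider any two distinct neighbors $i,j$ of $k$ in $\overline{G}$. Since $\overline{G}$ is triangle-free by Claim \ref{cl4}, we have $ij\notin E(\overline{G})$, i.e. $ij\in E(G)$. Thus the hypotheses of Claim \ref{cl5} are satisfied, giving
\[
(C_i-C_k)(C_j-C_k)>0.
\]
Moreover, by Claim \ref{cl1}, $C_i\ne C_k$ and $C_j\ne C_k$. Consequently the two (nonzero) quantities $C_i-C_k$ and $C_j-C_k$ have the same strict sign. Since this holds for every pair of neighbors of $k$, either every $\overline{G}$-neighbor $j$ of $k$ satisfies $C_j<C_k$, or every such $j$ satisfies $C_j>C_k$.

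Next I would define
\[
U^{+}=\{k\in[n]:\ C_j<C_k\text{ for every }j\text{ with }jk\in E(\overline{G})\},
\qquad
U^{-}=[n]\setminus U^{+},
\]
assigning isolated vertices of $\overline{G}$ to $U^{-}$ by convention (they do not affect the conclusion). The observation above shows that every vertex of positive $\overline{G}$-degree lies in exactly one of $U^{+},U^{-}$, with the corresponding uniformity of sign. It remains to check that every edge of $\overline{G}$ is bichromatic with the right order. Let $uw\in E(\overline{G})$ and suppose $u\in U^{+}$; then $C_w<C_u$ by the definition of $U^{+}$. To see $w\in U^{-}$, note that $u$ is a $\overline{G}$-neighbor of $w$ and $C_u>C_w$, so not all neighbors of $w$ have smaller $C$-value, hence $w\notin U^{+}$. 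Therefore every edge of $\overline{G}$ joins $U^{+}$ to $U^{-}$, proving that $\overline{G}$ is bipartite, with $C_u>C_w$ whenever $u\in U^{+}$, $w\in U^{-}$, $uw\in E(\overline{G})$.

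The whole argument is essentially bookkeeping: the real work has been done in Claims \ref{cl1}, \ref{cl4}, and \ref{cl5}. The only point to watch is that the local sign uniformity around $k$ genuinely yields a global bipartition, which follows at once from the short consistency check in the preceding paragraph. I do not anticipate a technical obstacle here.
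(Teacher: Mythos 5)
Your proof is correct and follows the same core idea as the paper: combine Claim~\ref{cl4} (so that any two $\overline{G}$-neighbors of a vertex $k$ are adjacent in $G$) with Claim~\ref{cl5} to get uniform sign of $C_j - C_k$ over all $\overline{G}$-neighbors $j$ of $k$, and define $U^+$, $U^-$ accordingly. The only structural difference is that the paper first proves bipartiteness by a separate odd-cycle argument (finding three consecutive vertices $i,k,j$ on any odd cycle with $(C_i-C_k)(C_j-C_k)<0$ and invoking Claim~\ref{cl5} for a contradiction) and only then builds the partition, whereas you observe — correctly — that the sign-uniformity statement already forces every $\overline{G}$-edge to be bichromatic, so bipartiteness drops out for free. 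Your route is a modest streamlining that eliminates the redundant odd-cycle step; both arguments rest on exactly the same input claims.
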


\begin{proof}
Since $C_{i}\neq C_{j}$ for every $ij\in E\left(  \overline{G}\right)  ,$ if
$\overline{G}$ has an odd cycle, there exist three consecutive vertices
$i,k,j$ along the cycle such that $\left(  C_{i}-C_{k}\right)  \left(
C_{j}-C_{k}\right)  <0.$ Since $\overline{G}$ is triangle-free, $ij\in
E\left(  G\right)  ;$ hence the existence of the vertices $i,j,k$ contradicts
Claim \ref{cl5}. Thus, $\overline{G}$ is bipartite.

Claim \ref{cl5} implies that for every $u\in\left[  n\right]  ,$ the value
$C_{u}-C_{v}$ has the same sign for every $v$ such that $uv\in E\left(
\overline{G}\right)  .$ Let $U^{+}$ be the set of vertices for which this sign
is positive, and let $U^{-}=\left[  n\right]  \backslash U^{+}.$ Clearly, for
every $uv\in E\left(  \overline{G}\right)  ,$ if $u\in U^{+},$ then $v\in
U^{-},$ and if $u\in U^{-},$ then $v\in U^{+}$. Hence, $U^{+}$ and $U^{-}$
partition properly the vertices of $\overline{G},$ completing the proof of
Claim \ref{cl5}.
\end{proof}

Hereafter we suppose that the vertex classes $U^{+}$ and $U^{-}$ of
$\overline{G}$ are selected to satisfy the condition of Claim \ref{cl6}. Note
that $U^{+}$ and $U^{-}$ induce complete graphs in $G.$

\begin{claim}
\label{cl7} $G$ contains an induced $4$-cycle.
\end{claim}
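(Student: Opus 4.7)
The plan is to argue by contradiction: assume $G$ has no induced $4$-cycle and produce a perturbation that violates Claim~\ref{cl0}.

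First I translate the hypothesis into a condition on $\overline{G}$. Since $U^{+}$ and $U^{-}$ induce cliques in $G$, any induced $4$-cycle $v_{1}v_{2}v_{3}v_{4}$ of $G$ must have both of its diagonals $v_{1}v_{3}, v_{2}v_{4}$ in $\overline{G}$; bipartiteness of $\overline{G}$ then forces $\{v_{1},v_{2}\}\subseteq U^{+}$ and $\{v_{3},v_{4}\}\subseteq U^{-}$ after relabeling. Hence ``no induced $4$-cycle in $G$'' is equivalent to ``no induced $2K_{2}$ in $\overline{G}$ across $(U^{+},U^{-})$'', which means the $\overline{G}$-neighborhoods in each class are linearly ordered by inclusion. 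Since $G$ is not complete, $\overline{G}$ has at least one edge; the nesting then produces $u^{\ast}\in U^{+}$ and $w^{\ast}\in U^{-}$ whose $\overline{G}$-neighborhoods dominate all others in their classes, and in particular $u^{\ast}w^{\ast}\in E(\overline{G})$, with $C_{u^{\ast}}>C_{w^{\ast}}$ by Claim~\ref{cl6}.

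Next I would construct a perturbation $\mathbf{y}=(x_{1}+\Delta_{1},\ldots,x_{n}+\Delta_{n})$ supported on $\{u^{\ast},w^{\ast}\}$ together with one or two auxiliary vertices from the nested structure, patterned after the proofs of Claims~\ref{cl4}--\ref{cl5}: choose the $\Delta_{i}$ so that $\mathbf{y}\in\mathcal{X}(n)$, the quadratic $L_{2}$-change cancels along a suitable line or hyperbola, and some coordinate of $\mathbf{y}$ is driven to $0$. The Lagrange condition (\ref{lag1}) kills the first-order part of $L_{r}(A,\mathbf{y})-L_{r}(A,\mathbf{x})$, leaving a second-order expression in the $D_{ij}$. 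The nestedness guarantees that the diagonal pair along which the correction sits is an edge of $G$, so Claim~\ref{cl2} gives $D_{ij}<\lambda$ and forces the remainder to be nonpositive; thus $L_{r}(A,\mathbf{y})\le\varphi_{r}(n,c)$, and the zero coordinate of $\mathbf{y}$ contradicts Claim~\ref{cl0}.

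The main obstacle is to ensure that this sign argument works uniformly across all possible chain-bipartite shapes of $\overline{G}$. I expect Lemma~\ref{le1} to be the analytic workhorse: its four parameters $a,b,c,d$ naturally encode degree-like quantities at $u^{\ast}, w^{\ast}$ and a pair of their $\overline{G}$-neighbors, and the nestedness of the neighborhoods is exactly what gives the inequalities $0\le c\le a$ and $0\le d\le b$ needed to apply it. Degenerate cases---for instance $\overline{G}$ a single edge or a star---should be handled by the simpler three-vertex moves already used in Claims~\ref{cl1}--\ref{cl2}.
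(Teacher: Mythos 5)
Your first paragraph is sound and matches the paper exactly: ``no induced $4$-cycle'' is equivalent to the $\overline{G}$-neighborhoods into the opposite class being nested (linearly ordered by inclusion), so there are vertices $u_{1}\in U^{+}$, $u_{2}\in U^{-}$ whose neighborhoods across $\overline{G}$ dominate all others.

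After that, however, you head in a direction the paper does not take, and the plan you sketch has a real gap. Once nestedness is established, the paper does \emph{not} run another perturbation/Lagrange argument. Instead it splits into two elementary cases. If neither $u_{1}$ nor $u_{2}$ is joined in $G$ to the whole opposite class, then by nestedness one finds $x\in U^{-}$ and $y\in U^{+}$ that have \emph{no} $G$-neighbors across the bipartition; inserting the missing edge $xy$ raises $L_{2}$ while leaving $L_{r}$ unchanged (no new $r$-clique can use $xy$), which contradicts Proposition~\ref{pro1}. Otherwise some vertex is $G$-adjacent to every other vertex; then $L_{r}(A,\mathbf{x})=x_{n}L_{r-1}(B,\mathbf{y})+L_{r}(B,\mathbf{y})$ on the remaining $n-1$ vertices, and the \emph{minimality of $n$} in the main assumption forces the optimizer on $n-1$ vertices to be a complete graph, making $G$ complete --- a contradiction. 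Neither of these two finishing devices (monotonicity of $\varphi_{r}$ in $c$; induction via minimality of $n$) appears in your proposal.

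Your proposed finishing move --- a four-point move whose second-order remainder is controlled by Lemma~\ref{le1} --- is actually the mechanism of the \emph{next} claim, Claim~\ref{cl7.1}, where it is applied \emph{to} an induced $4$-cycle $(i,j,k,l)$: there the hypotheses $c\le a$, $d\le b$ of Lemma~\ref{le1} come from $C_{i}>C_{k}$ and $C_{j}>C_{l}$ on the cycle. In the nested (no-$4$-cycle) situation you are in for Claim~\ref{cl7}, you do not have a $4$-cycle to anchor that construction on, and you do not say which quadruple or which line/hyperbola would make both $\Delta L_{2}=0$ and the sign argument close. As written, the proposal stops at a heuristic; you would need to replace the second half with either of the paper's two concrete finishing arguments (or an equally concrete alternative).
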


\begin{proof}
Assume the assertion false. For every vertex $u,$ write $N\left(  u\right)  $
for the set of its neighbors in the vertex class opposite to its own class. 

If there exist $u,v\in U^{+}$ such that $\ N\left(  u\right)  \backslash
N\left(  v\right)  \neq\varnothing$ and $N\left(  v\right)  \backslash
N\left(  u\right)  \neq\varnothing,$ taking $x\in N\left(  u\right)
\backslash N\left(  v\right)  $ and $y\in N\left(  v\right)  \backslash
N\left(  u\right)  ,$ we see that $\left\{  x,y,u,v\right\}  $ induces a
$4$-cycle in $G;$ thus we will assume that $N\left(  u\right)  \subset
N\left(  v\right)  $ or $N\left(  v\right)  \subset N\left(  u\right)  $ for
every $u,v\in U^{+}.$ This condition implies that there is a vertex $u_{1}\in
U^{+}$ such that $N\left(  v\right)  \subset N\left(  u_{1}\right)  $ for
every $v\in U^{+}.$ By symmetry, there is a vertex $u_{2}\in U^{-}$ such that
$N\left(  v\right)  \subset N\left(  u_{2}\right)  $ for every $v\in U^{-}.$

If $N\left(  u_{1}\right)  \neq U^{-}$ and $N\left(  u_{2}\right)  \neq
U^{+},$ take $x\in U^{-}\backslash N\left(  u_{1}\right)  $ and $y\in
U^{+}\backslash N\left(  u_{2}\right)  ,$ and note that $N\left(  x\right)
=\varnothing$ and $N\left(  y\right)  =\varnothing.$ Hence, adding the edge
$xy$ to $E\left(  G\right)  ,$ we see that $L_{r}\left(  A,\mathbf{x}\right)
$ remains the same, while $L_{2}\left(  A,\mathbf{x}\right)  $ increases,
contradicting that $\varphi_{r}\left(  n,c\right)  $ is increasing in $c$
(Proposition \ref{pro1}). Thus, either $N\left(  u_{1}\right)  =U^{-}$ or
$N\left(  u_{2}\right)  =U^{+},$ so one of the vertices $u_{1}$ or $u_{2}$ is
connected to every vertex other than itself.

By symmetry, suppose that the vertex $n$ is connected to every vertex of $G$
other than itself. Set $\mathbf{y}=\left(  x_{1},\ldots,x_{n-1}\right)  $ and
let $B$ be the principal submatrix of $A$ in the first $\left(  n-1\right)  $
columns. Since%
\begin{align}
x_{1}+\cdots+x_{n-1}  &  =1-x_{n},\label{con1}\\
L_{2}\left(  B,\mathbf{y}\right)   &  =c-x_{n}\left(  1-x_{n}\right)  ,
\label{con2}%
\end{align}
and
\[
L_{r}\left(  A,\mathbf{x}\right)  =x_{n}L_{r-1}\left(  B,\mathbf{y}\right)
+L_{r}\left(  B,\mathbf{y}\right)  ,
\]
we see that $x_{n}L_{r-1}\left(  B,\mathbf{y}\right)  +L_{r}\left(
B,\mathbf{y}\right)  $ is minimum subject to (\ref{con1}) and (\ref{con2}).
Since $B\in\mathcal{A}\left(  n-1\right)  $, by the main assumption, both
$L_{r-1}\left(  B,\mathbf{z}\right)  $ and $L_{r}\left(  B,\mathbf{z}\right)
$ attain a minimum on a complete graph $H$ and for the same vector
$\mathbf{z}$. Since $n$ is joined to every vertex of $H,$ the minimum
$\varphi_{r}\left(  n,c\right)  $ is attained on a complete graph too, a
contradiction completing the proof of Claim \ref{cl7}.
\end{proof}

For convenience, an induced $4$-cycle in $G$ will be denoted by a quadruple
$\left(  i,j,k,l\right)  ,$ where $i,j,k,l$ are the vertices of the cycle,
arranged so that $i,j\in U^{+},$ $k,l\in U^{-},$ $ik\notin E\left(  G\right)
,$ and $jl\notin E\left(  G\right)  .$

\begin{claim}
\label{cl7.1} If $\left(  i,j,k,l\right)  $ is an induced $4$-cycle in $G,$
then $D_{ij}+D_{kl}<D_{jk}+D_{li}.$
\end{claim}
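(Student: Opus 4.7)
The plan is to construct a one-parameter family of feasible perturbations of $\mathbf{x}$ supported on $\{i,j,k,l\}$ that preserves both $L_{1}$ and $L_{2}$ \emph{exactly}; multilinearity of $L_{r}$ then forces $L_{r}(A,\mathbf{y}(t))-L_{r}(A,\mathbf{x})$ to be an exact quadratic polynomial in $t$ whose coefficient is, up to a nonzero scalar, the quantity we wish to bound.

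For step one I would set $\Delta_{i}=\alpha$, $\Delta_{j}=\beta$, $\Delta_{k}=-\alpha$, $\Delta_{l}=-\beta$, and $\Delta_{m}=0$ otherwise, choosing $(\alpha,\beta)\neq(0,0)$ subject to $(C_{i}-C_{k})\alpha+(C_{j}-C_{l})\beta=0$. By Claim \ref{cl6} we have $C_{i}>C_{k}$ and $C_{j}>C_{l}$ (as $ik,jl\in E(\overline{G})$ with $i,j\in U^{+}$ and $k,l\in U^{-}$), so this is a nondegenerate linear condition and forces $\alpha\beta<0$. Setting $\mathbf{y}(t)=\mathbf{x}+t\Delta$ one checks $\sum_{m}\Delta_{m}=0$, $\sum_{m}C_{m}\Delta_{m}=0$, and, substituting $a_{ij}=a_{il}=a_{jk}=a_{kl}=1$ and $a_{ik}=a_{jl}=0$,
\[
\sum_{m<n}a_{mn}\Delta_{m}\Delta_{n}=\alpha\beta\bigl(a_{ij}-a_{il}-a_{jk}+a_{kl}\bigr)-a_{ik}\alpha^{2}-a_{jl}\beta^{2}=0.
\]
Hence $L_{1}$ and $L_{2}$ are preserved for every $t$, so $(A,\mathbf{y}(t))\in\mathcal{S}_{n}(c)$ whenever $\mathbf{y}(t)\geq 0$; let $(t_{-},t_{+})$ be the maximal open interval on which $\mathbf{y}(t)>0$.

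For step two I would compute $L_{r}(A,\mathbf{y}(t))$ exactly. Because $L_{r}$ is multilinear, for every $T\subseteq\{i,j,k,l\}$ with $|T|\geq 3$ the derivative $D_{T}:=\partial^{|T|}L_{r}/\prod_{m\in T}\partial x_{m}$ vanishes, since $T$ must contain one of the non-edges $ik$ or $jl$; likewise $D_{ik}=D_{jl}=0$. Combining Lagrange's condition (\ref{lag1}) with $\sum\Delta_{m}=\sum C_{m}\Delta_{m}=0$ kills the linear term in $t$, leaving the \emph{exact} identity
\[
L_{r}(A,\mathbf{y}(t))-L_{r}(A,\mathbf{x})=t^{2}\alpha\beta\bigl[D_{ij}+D_{kl}-D_{jk}-D_{li}\bigr].
\]

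For step three I would upgrade to strict inequality. Minimality of $L_{r}$ at $(A,\mathbf{x})$ forces the right side to be $\geq 0$ on $(t_{-},t_{+})$, and combined with $\alpha\beta<0$ this already gives $D_{ij}+D_{kl}\leq D_{jk}+D_{li}$. If equality held, the right side would be identically zero in $t$, so $L_{r}(A,\mathbf{y}(t))=\varphi_{r}(n,c)$ would persist up to $t=t_{+}$, a finite positive value at which some coordinate of $\mathbf{y}$ first vanishes; then $(A,\mathbf{y}(t_{+}))\in\mathcal{S}_{n}(c)$ would be a minimizer with a zero entry, contradicting Claim \ref{cl0}. Hence the inequality is strict. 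The main obstacle is recognizing the correct perturbation in step one: the coupled constraints $\sum\Delta_{m}=0$ and $\sum_{m<n}a_{mn}\Delta_{m}\Delta_{n}=0$ pin down $\Delta_{k}=-\Delta_{i}$ and $\Delta_{l}=-\Delta_{j}$ uniquely, and it is exactly this rigidity (together with the vanishing of $D_{T}$ for $|T|\geq 3$) that makes the identity of step two an equality rather than an $O(t^{3})$ approximation, which in turn is what enables the boundary argument via Claim \ref{cl0} to promote $\leq$ to $<$.
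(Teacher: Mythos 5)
Your proof is correct and follows essentially the same route as the paper's: the same anti-symmetric perturbation $\Delta_i=\alpha$, $\Delta_j=\beta$, $\Delta_k=-\alpha$, $\Delta_l=-\beta$ constrained to the line $(C_i-C_k)\alpha+(C_j-C_l)\beta=0$, the same use of Lagrange's condition (\ref{lag1}) to kill the linear term, the same observation that minimality together with $\alpha\beta<0$ gives $\leq$, and the same boundary argument invoking Claim \ref{cl0} to upgrade to strict inequality. The one place you go beyond the paper's exposition is in explicitly noting that $D_T=0$ for every $T\subseteq\{i,j,k,l\}$ with $|T|\geq 3$ (and for $T\in\{\{i,k\},\{j,l\}\}$), because $T$ then contains a non-edge; the paper writes the Taylor expansion as terminating at order two without spelling this out, so your remark makes the exactness of the quadratic identity cleaner. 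One small overstatement: the two constraints do not literally ``pin down $\Delta_k=-\Delta_i$ and $\Delta_l=-\Delta_j$ uniquely'' among all perturbations supported on $\{i,j,k,l\}$; rather, this particular choice is one convenient solution that makes the quadratic correction to $L_2$ vanish identically. This is only a rhetorical flourish and does not affect the validity of the argument.
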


\begin{proof}
Indeed, let $L$ be the line defined by
\begin{equation}
\left(  C_{i}-C_{k}\right)  x+\left(  C_{j}-C_{l}\right)  y=0.\label{lin}%
\end{equation}
Since $i,j\in U^{+}$ and $k,l\in U^{-},$ we have $C_{i}>C_{k}$ and
$C_{j}>C_{l};$ thus $xy<0$ for all $\left(  x,y\right)  \in L$. Suppose that
$\alpha\in\left(  0,x_{k}\right)  ,$ $\beta\in\left(  -x_{j},0\right)  ,$ and
$\left(  \alpha,\beta\right)  \in L.$ Let $\mathbf{y}_{\alpha}=\left(
x_{1}+\Delta_{1},\ldots,x_{n}+\Delta_{n}\right)  ,$ where%
\[
\Delta_{i}=\alpha,\text{ \ \ }\Delta_{j}=\beta,\text{ \ \ }\Delta_{k}%
=-\alpha,\text{ \ \ }\Delta_{l}=-\beta,\text{ \ \ and \ \ }\Delta_{h}=0\text{
for }h\in\left[  n\right]  \backslash\left\{  i,j,k,l\right\}  .
\]
Clearly, $\mathbf{y}_{\alpha}\in\mathcal{X}\left(  n\right)  ;$ Taylor's
expansion (\ref{Tay2}) and equation (\ref{lin}) give%
\[
L_{2}\left(  A,\mathbf{y}_{\alpha}\right)  -L_{2}\left(  A,\mathbf{x}\right)
=\left(  C_{i}-C_{k}\right)  \alpha+\left(  C_{j}-C_{l}\right)  \beta
+\alpha\beta-\alpha\beta+\alpha\beta-\alpha\beta=0;
\]
thus $\left(  A,\mathbf{y}_{\alpha}\right)  \in\mathcal{S}_{n}\left(
c\right)  .$ Taylor's expansion (\ref{Tayr}), Lagrange's condition
(\ref{lag1}), and equation (\ref{lin}) give%
\begin{align*}
L_{r}\left(  A,\mathbf{y}_{\alpha}\right)  -L_{r}\left(  A,\mathbf{x}\right)
&  =D_{i}\alpha+D_{j}\beta-D_{k}\alpha-D_{l}\beta+\left(  D_{ij}-D_{jk}%
+D_{kl}-D_{li}\right)  \alpha\beta\\
&  =\lambda\left(  C_{i}\alpha+C_{j}\beta-C_{k}\alpha-C_{l}\beta\right)
+\left(  D_{ij}-D_{jk}+D_{kl}-D_{li}\right)  \alpha\beta\\
&  =\left(  D_{ij}-D_{jk}+D_{kl}-D_{li}\right)  \alpha\beta.
\end{align*}
Since $L_{r}\left(  A,\mathbf{y}_{\alpha}\right)  \geq L_{r}\left(
A,\mathbf{x}\right)  $ and $\alpha\beta<0,$ we find that $D_{ij}+D_{kl}\leq
D_{jk}+D_{li}.$ If $D_{ij}+D_{kl}=D_{jk}+D_{li}$, setting%
\[
\alpha=\min\left\{  x_{k},\frac{C_{j}-C_{l}}{C_{i}-C_{k}}x_{j}\right\}  ,
\]
we see that $L_{r}\left(  A,\mathbf{y}_{\alpha}\right)  =L_{r}\left(
A,\mathbf{x}\right)  $ and either the $k$th or the $j$th entry of
$\mathbf{y}_{\alpha}$ is zero, contradicting Claim \ref{cl0}. Hence,
$D_{ij}+D_{kl}<D_{jk}+D_{li},$ completing the proof of Claim \ref{cl7.1}.
\end{proof}

Select an induced $4$-cycle $\left(  i,j,k,l\right)  $ and let us investigate
$D_{ij},D_{kl},D_{jk},$ and $D_{li}$ in the light of Claim \ref{cl7.1}. We
have
\begin{align*}
D_{ij}  &  =\sum\left\{  x_{i_{1}}\cdots x_{i_{r-2}}:\left\{  i,j,i_{1}%
,\ldots,i_{r-2}\right\}  \text{ induces an }r\text{-clique}\right\}  ,\\
D_{kl}  &  =\sum\left\{  x_{i_{1}}\cdots x_{i_{r-2}}:\left\{  k,l,i_{1}%
,\ldots,i_{r-2}\right\}  \text{ induces an }r\text{-clique}\right\}  ,\\
D_{jk}  &  =\sum\left\{  x_{i_{1}}\cdots x_{i_{r-2}}:\left\{  j,k,i_{1}%
,\ldots,i_{r-2}\right\}  \text{ induces an }r\text{-clique}\right\}  ,\\
D_{li}  &  =\sum\left\{  x_{i_{1}}\cdots x_{i_{r-2}}:\left\{  j,k,i_{1}%
,\ldots,i_{r-2}\right\}  \text{ induces an }r\text{-clique}\right\}  .
\end{align*}
First note that if a product $x_{i_{1}}\cdots x_{i_{r-2}}$ is present in any
of the above sums, then $\left\{  i_{1},\ldots,i_{r-2}\right\}  \cap\left\{
i,j,k,l\right\}  \neq\varnothing.$ Also, a product $x_{i_{1}}\cdots
x_{i_{r-2}}$ is present in both $D_{ij}$ and $D_{kl}$ exactly when it is
present in both $D_{jk}$ and $D_{li}.$ Hence, Claim \ref{cl7.1} implies that
there exists a set $\left\{  i_{1},\ldots,i_{r-2}\right\}  $ such that either
$\left\{  j,k,i_{1},\ldots,i_{r-2}\right\}  $ or $\left\{  i,l,i_{1}%
,\ldots,i_{r-2}\right\}  $ induces an $r$-clique, but neither $\left\{
i,j,i_{1},\ldots,i_{r-2}\right\}  $ nor $\left\{  k,l,i_{1},\ldots
,i_{r-2}\right\}  $ induces an $r$-clique. This is a contradiction for $r=3,$
as either $\left\{  p,i,j\right\}  $ or $\left\{  p,k,l\right\}  $ induces a
triangle for every vertex $p\notin\left\{  i,j,k,l\right\}  .$

Let now $r=4$. We shall reach a contradiction by proving that $D_{ij}%
+D_{kl}\geq D_{jk}+D_{li}.$ Let $D_{ij}^{\ast}$ be the sum of all products
$x_{p}x_{q}$ present in $D_{ij}$ but not present in any of $D_{jk}%
,D_{kl},D_{il}.$ Defining the sums $D_{jk}^{\ast},$ $D_{kl}^{\ast},$ and
$D_{il}^{\ast}$ likewise, we see that
\[
D_{ij}+D_{kl}-D_{jk}-D_{li}=D_{ij}^{\ast}+D_{kl}^{\ast}-D_{jk}^{\ast}%
-D_{li}^{\ast},
\]
so it suffices to prove $D_{ij}^{\ast}+D_{kl}^{\ast}-D_{jk}^{\ast}%
-D_{li}^{\ast}\geq0.$ To this end, write $\Gamma\left(  u\right)  $ for the
set of neighbors of a vertex $u$ and set
\begin{align*}
A &  =\Gamma\left(  i\right)  \backslash\Gamma\left(  k\right)  ,\text{
\ \ }B=\Gamma\left(  j\right)  \backslash\Gamma\left(  l\right)  ,\text{
\ \ }X=A\cap B,\\
C &  =\Gamma\left(  k\right)  \backslash\Gamma\left(  i\right)  ,\text{
\ \ }D=\Gamma\left(  l\right)  \backslash\Gamma\left(  j\right)  ,\text{
\ \ }Y=C\cap D,\\
a &  =\sum_{p\in A}x_{p},\text{ \ \ }b=\sum_{p\in B}x_{p},\text{ \ \ }%
c=\sum_{p\in C}x_{p},\text{ \ \ }d=\sum_{p\in D}x_{p},\text{ \ \ }x=\sum_{p\in
X}x_{p},\text{ \ \ }y=\sum_{p\in Y}x_{p}.
\end{align*}

Observe that $A,B$ and $X$ are subsets of $U^{+}\backslash\left\{
i,j\right\}  ,$ while $C,D$ and $Y$ are subsets of $U^{-}\backslash\left\{
k,l\right\}  .$ For reader's sake, here is an alternative view on $A,B,C,D,X,$
and $Y$:
\begin{align*}
A\backslash X  &  =\Gamma\left(  i\right)  \cap\Gamma\left(  j\right)
\cap\Gamma\left(  l\right)  \backslash\Gamma\left(  k\right)  ,\text{
\ \ }B\backslash X=\Gamma\left(  i\right)  \cap\Gamma\left(  j\right)
\cap\Gamma\left(  k\right)  \backslash\Gamma\left(  l\right)  ,\\
C\backslash Y  &  =\Gamma\left(  k\right)  \cap\Gamma\left(  l\right)
\cap\Gamma\left(  j\right)  \backslash\Gamma\left(  i\right)  ,\text{
\ \ }D\backslash Y=\Gamma\left(  k\right)  \cap\Gamma\left(  l\right)
\cap\Gamma\left(  i\right)  \backslash\Gamma\left(  j\right)  ,\\
X  &  =\Gamma\left(  i\right)  \cap\Gamma\left(  j\right)  \backslash\left(
\Gamma\left(  k\right)  \cup\Gamma\left(  l\right)  \right)  ,\text{
\ \ \ }Y=\Gamma\left(  k\right)  \cap\Gamma\left(  l\right)  \backslash\left(
\Gamma\left(  i\right)  \cup\Gamma\left(  j\right)  \right)  ,
\end{align*}

Let the product $x_{p}x_{q}$ be present in $D_{jk}^{\ast.};$ thus $\left\{
j,k,p,q\right\}  $ induces an $4$-clique, but neither $\left\{
i,j,p,q\right\}  $ nor $\left\{  k,l,p,q\right\}  $ induces an $4$-clique.
Clearly, $p$ and $q$ belong to different vertex classes of $\overline{G},$ say
$p\in U^{+}$ and $q\in U^{-}.$ Since $i,j,$ and $k$ are joined to $p,$ we must
have $pl\notin E\left(  G\right)  ,$ and so $p\in B\backslash X;$ likewise we
find that $q\in C\backslash Y$. Thus
\begin{equation}
D_{jk}^{\ast}\leq\sum_{u\in B\backslash X}x_{u}\sum_{u\in C\backslash Y}%
x_{u}=\left(  b-x\right)  \left(  c-y\right)  ,\label{mi1}%
\end{equation}
and by symmetry,%
\begin{equation}
D_{il}^{\ast}\leq\sum_{u\in A\backslash X}x_{u}\sum_{u\in D\backslash Y}%
x_{u}=\left(  a-x\right)  \left(  d-y\right)  .\label{mi2}%
\end{equation}

For every pair $\left(  p,q\right)  $ satisfying
\[
p\in X,\text{ }q\in B\backslash X,\text{ \ \ or \ \ }p\in A\backslash X,\text{
}q\in X,\text{ \ \ or \ \ }p\in A\backslash X,\text{ }q\in B\backslash X,
\]
we see that $\left\{  i,j,p,q\right\}  $ induces an $4$-clique, but $p$ is not
joined to $k$ and $q$ is not joined to $l;$ thus $x_{p}x_{q}$ is present in
$D_{ij}^{\ast}.$ Therefore,
\begin{equation}
D_{ij}^{\ast}\geq\sum_{u\in X}x_{u}\sum_{u\in B\backslash X}x_{u}+\sum_{u\in
A\backslash X}x_{u}\sum_{u\in X}x_{u}+\sum_{u\in A\backslash X}x_{u}\sum_{u\in
B\backslash X}x_{u}=ab-x^{2},\label{mi3}%
\end{equation}
and by symmetry,%
\begin{equation}
D_{kl}^{\ast}\geq\sum_{u\in Y}x_{u}\sum_{u\in D\backslash Y}x_{u}+\sum_{u\in
C\backslash Y}x_{u}\sum_{u\in Y}x_{u}+\sum_{u\in C\backslash Y}x_{u}\sum_{u\in
D\backslash Y}x_{u}=cd-y^{2}.\label{mi4}%
\end{equation}
Now adding (\ref{mi3}) and (\ref{mi4}), and subtracting (\ref{mi1}) and
(\ref{mi2}), we obtain%
\begin{align*}
D_{ij}^{\ast}+D_{kl}^{\ast}-D_{jk}^{\ast}-D_{li}^{\ast} &  \geq ab-x^{2}%
+cd-y^{2}-\left(  b-x\right)  \left(  c-y\right)  -\left(  a-x\right)  \left(
d-y\right)  \\
&  =\left(  a-c\right)  \left(  b-d\right)  +x\left(  c+d\right)  +y\left(
a+b\right)  -\left(  x+y\right)  ^{2}.
\end{align*}
Hence, using $x\leq\min\left(  a,b\right)  ,$ $y\leq\min\left(  c,d\right)  ,$
and the inequalities
\begin{align*}
a-c  & =\sum_{u\in\Gamma\left(  i\right)  \backslash\Gamma\left(  k\right)
}x_{u}+\sum_{u\in\Gamma\left(  i\right)  \cap\Gamma\left(  k\right)  }%
x_{u}-\sum_{u\in\Gamma\left(  k\right)  \backslash\Gamma\left(  i\right)
}x_{u}-\sum_{u\in\Gamma\left(  i\right)  \cap\Gamma\left(  k\right)  }%
x_{u}=C_{i}-C_{k}>0,\\
b-d  & =\sum_{u\in\Gamma\left(  j\right)  \backslash\Gamma\left(  l\right)
}x_{u}+\sum_{u\in\Gamma\left(  j\right)  \cap\Gamma\left(  l\right)  }%
x_{u}-\sum_{u\in\Gamma\left(  l\right)  \backslash\Gamma\left(  j\right)
}x_{u}-\sum_{u\in\Gamma\left(  j\right)  \cap\Gamma\left(  l\right)  }%
x_{u}=C_{j}-C_{l}>0,
\end{align*}
Lemma \ref{le1} implies that $D_{ij}^{\ast}+D_{kl}^{\ast}-D_{jk}^{\ast}%
-D_{li}^{\ast}\geq0,$ as required.

This finishes the proof that $G$ is a complete graph for $r=3,4$.

\subsection*{Proof of $L_{r}\left(  A,\mathbf{x}\right)  =\varphi_{r}\left(
c\right)  $}

We know now that $G$ is a complete graph. We have to show that $n=\xi\left(
c\right)  $ and $\left(  x_{1},\ldots,x_{n}\right)  =\left(  x,\ldots
,x,y\right)  ,$ where $x$ and $y$ are given by (\ref{sol}). Our proof is based
on the following assertion.$^{{}}$

\begin{claim}
\label{cl8} Let $x_{3}\geq x_{2}\geq x_{1}>0$ be real numbers satisfying%
\begin{align}
x_{1}+x_{2}+x_{3}  &  =a,\label{cons1}\\
x_{1}x_{2}+x_{2}x_{3}+x_{3}x_{1}  &  =b, \label{cons1.1}%
\end{align}
and let $x_{1}x_{2}x_{3}$ be minimum subject to (\ref{cons1}) and
(\ref{cons1.1}). Then $x_{2}=x_{3}$.
\end{claim}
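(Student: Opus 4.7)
The plan is to apply Lagrange multipliers at the minimizer to force two of the $x_i$ to coincide, and then compare the two resulting symmetric configurations to single out the pair $x_2 = x_3$. Since $x_1 > 0$ the minimizer lies in the interior of the positive octant. The bound $(x_1 + x_2 + x_3)^2 \ge 3(x_1 x_2 + x_2 x_3 + x_3 x_1)$ gives $a^2 \ge 3b$; in the degenerate case $a^2 = 3b$ it forces $x_1 = x_2 = x_3$ and the conclusion is automatic, so I assume henceforth that $a^2 > 3b$ and set $u = \sqrt{a^2 - 3b} > 0$. In this regime the gradients $(1,1,1)$ and $(x_2+x_3,\, x_1+x_3,\, x_1+x_2)$ of the two constraint functions are linearly independent, so Lagrange's rule applies.

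It supplies multipliers $\lambda, \mu$ with $x_j x_k = \lambda + \mu(x_j + x_k)$ for each cyclic triple $(i,j,k)$ of $(1,2,3)$. Subtracting two such equations (those for the indices $i$ and $j$, which share $x_k$ on the right) yields $(x_i - x_j)(x_k - \mu) = 0$ for every choice of distinct $i,j,k$. If all three $x_i$ were distinct, then $\mu$ would have to equal each of $x_1, x_2, x_3$, a contradiction. Hence at least two of $x_1, x_2, x_3$ coincide at the minimum, and the ordering leaves only the two patterns $x_1 = x_2 < x_3$ and $x_1 < x_2 = x_3$.

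In either pattern, substituting the repeated value $t$ into the two constraints produces the single quadratic $3t^2 - 2at + b = 0$ with roots $t_{\pm} = (a \pm u)/3$; the ordering forces $t = t_-$ (so $x_3 = (a + 2u)/3$) in the first pattern and $t = t_+$ (so $x_1 = (a - 2u)/3$) in the second. A direct expansion then gives
\[
\bigl(x_1 x_2 x_3\bigr)\big|_{x_2 = x_3} - \bigl(x_1 x_2 x_3\bigr)\big|_{x_1 = x_2} = \tfrac{1}{27}\bigl[(a-2u)(a+u)^2 - (a-u)^2(a+2u)\bigr] = -\tfrac{4 u^3}{27} < 0.
\]
Both configurations satisfy identical constraints, so the $x_1 = x_2 < x_3$ configuration is not a minimizer; the minimum must therefore occur at $x_2 = x_3$.

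I expect the main obstacle to be the ordering bookkeeping in the final comparison: the Lagrange step uniformly outputs ``two entries coincide'' and produces the same quadratic in $t$ in either case, so picking out which pair is selected at the minimum requires the explicit expansion above, with each root $t_{\pm}$ paired with the correct pattern so that the clean cancellation leaving $-4u^3/27$ appears.
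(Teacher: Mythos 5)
Your overall strategy — Lagrange multipliers at the interior minimizer to force two coordinates to coincide, then a direct comparison of the two resulting symmetric configurations — is essentially the paper's own approach (the paper frames the comparison as a proof by contradiction, but the computation is identical and yields the same $-\tfrac{4}{27}u^{3}$). The Lagrange analysis, the linear-independence check, the quadratic $3t^{2}-2at+b=0$, and the assignment of $t_{-}$ to the $x_{1}=x_{2}$ pattern and $t_{+}$ to the $x_{2}=x_{3}$ pattern are all correct.

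There is, however, a genuine gap in the final step. The sentence ``both configurations satisfy identical constraints'' overlooks positivity: the competitor you exhibit for the $x_{2}=x_{3}$ pattern has smallest coordinate $x_{1}=(a-2u)/3$, which is positive only when $a>2u$, i.e.\ when $b>a^{2}/4$. Maclaurin gives $a^{2}\geq 3b$, hence $u\leq a/\sqrt{3}$, but this does \emph{not} force $u<a/2$, so positivity of the competitor is not automatic. If $b\leq a^{2}/4$ that configuration is infeasible, and a minimizer (if one assumed it existed) would, by your own Lagrange analysis, have the pattern $x_{1}=x_{2}<x_{3}$ — your comparison would produce no contradiction and the claim would appear to fail. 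The paper closes precisely this gap with a separate argument before the Lagrange step: assuming $b\leq a^{2}/4$, it sets $y_{1}=\varepsilon$, solves a quadratic for $y_{2},y_{3}$ so that the constraints hold, and finds $y_{1}y_{2}y_{3}=\varepsilon(b-a\varepsilon+\varepsilon^{2})\to 0$, so the infimum over positive triples is not attained, contradicting the hypothesis that a positive minimizer exists. Once $b>a^{2}/4$ is established in this way, $(a-2u)/3>0$ and your comparison becomes legitimate; without it, the case $b\leq a^{2}/4$ is unhandled.
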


\begin{proof}
First note that the hypothesis implies that
\begin{equation}
a^{2}/4<b\leq a^{2}/3. \label{condb}%
\end{equation}
Indeed, the second of these inequalities follows from Maclaurin's inequality;
assume for a contradiction that the first one fails. Then, selecting a
sufficiently small $\varepsilon>0$ and setting
\[
y_{1}=\varepsilon,\text{ }y_{2}=\frac{a-\varepsilon-\sqrt{\left(
a+\varepsilon\right)  ^{2}-4\left(  b+\varepsilon^{2}\right)  }}{2},\text{
}y_{3}=\frac{a-\varepsilon+\sqrt{\left(  a+\varepsilon\right)  ^{2}-4\left(
b+\varepsilon^{2}\right)  }}{2},
\]
we see that $y_{1}$, $y_{2}$, $y_{3}$ satisfy (\ref{cons1}), (\ref{cons1.1}),
and
\[
y_{1}y_{2}y_{3}=\varepsilon\left(  b-a\varepsilon+\varepsilon^{2}\right)
<\varepsilon b.
\]
Thus, $\min x_{1}x_{2}x_{3}$, subject to (\ref{cons1}) and (\ref{cons1.1}),
cannot be attained for positive $x_{1}$, $x_{2}$, $x_{3},$ a contradiction,
completing the proof of (\ref{condb}).

By Lagrange's method there exist $\eta$ and $\theta$ such that%
\begin{align*}
x_{1}x_{2}  &  =\eta+\theta\left(  x_{1}+x_{2}\right)  =\eta+\theta\left(
a-x_{3}\right) \\
x_{1}x_{3}  &  =\eta+\theta\left(  x_{1}+x_{3}\right)  =\eta+\theta\left(
a-x_{2}\right) \\
x_{2}x_{3}  &  =\eta+\theta\left(  x_{2}+x_{3}\right)  =\eta+\theta\left(
a-x_{1}\right)  .
\end{align*}

If $\theta=0$ we see that $x_{1}=x_{2}=x_{3},$ completing the proof. Suppose
$\theta\neq0$ and assume for a contradiction that $x_{2}<x_{3}.$ We find that
\begin{align*}
x_{1}\left(  x_{3}-x_{2}\right)   &  =\theta\left(  x_{3}-x_{2}\right)  ,\\
x_{2}\left(  x_{3}-x_{1}\right)   &  =\theta\left(  x_{3}-x_{1}\right)  ,
\end{align*}
and so, $x_{1}=x_{2}.$ Solving the system (\ref{cons1},\ref{cons1.1}) with
$x_{1}=x_{2},$ we obtain
\[
x_{3}=\frac{a}{3}+\frac{2}{3}\sqrt{a^{2}-3b}\text{, \ }x_{1}=x_{2}=\frac{a}%
{3}-\frac{1}{3}\sqrt{a^{2}-3b},
\]
implying that
\begin{equation}
x_{1}x_{2}x_{3}=\left(  \frac{a}{3}+\frac{2}{3}\sqrt{a^{2}-3b}\right)  \left(
\frac{a}{3}-\frac{1}{3}\sqrt{a^{2}-3b}\right)  ^{2}. \label{eq1}%
\end{equation}
If $b=a^{2}/3,$ we see that $x_{1}=x_{2}=x_{3},$ completing the proof, so
suppose that $b<a^{2}/3.$ We shall show that $\min x_{1}x_{2}x_{3},$ subject
to (\ref{cons1}) and (\ref{cons1.1}), is smaller than the right-hand side of
(\ref{eq1}). Indeed, setting
\[
y_{1}=\frac{a}{3}-\frac{2}{3}\sqrt{a^{2}-3b},\text{ \ }y_{2}=y_{3}=\frac{a}%
{3}+\frac{1}{3}\sqrt{a^{2}-3b},
\]
in view of (\ref{condb}), we see that $y_{1},y_{2},y_{3}$ satisfy
(\ref{cons1}) and (\ref{cons1.1}). After some algebra we obtain
\[
y_{1}y_{2}y_{3}-x_{1}x_{2}x_{3}=-\frac{4}{27}\left(  a^{2}-3b\right)
^{3/2}<0.
\]
This contradiction completes the proof of Claim \ref{cl8}.
\end{proof}

Claim \ref{cl8} implies that, out of every three entries of $\mathbf{x},$ the
two largest ones are equal; hence all but the smallest entry of $\mathbf{x}$
are equal. Writing $y$ and $x$ for the smallest and largest entries of
$\mathbf{x},$ we see that $x$ and $y$ satisfy
\begin{align*}
\binom{n-1}{2}x^{2}+nxy  &  =c,\\
\left(  n-1\right)  x+y  &  =1,\\
y  &  \leq x,
\end{align*}
and so,%
\[
y=\frac{1}{n}-\sqrt{1-2\frac{n}{n-1}c},\ \ \ x=\frac{1}{n}+\frac{1}{n}%
\sqrt{1-2\frac{n}{n-1}c}.
\]
Since the condition $1-2nc/\left(  n-1\right)  \geq0$ gives
\[
n\geq\frac{1}{1-2c},
\]
and $y>0$ gives
\[
1-2c<\frac{1}{n}+\frac{1}{n^{2}}<\frac{1}{n-1},
\]
we find that $n=\xi\left(  c\right)  ,$ completing the proof of Theorem
\ref{mainTh}.$\hfill\square$

\subsection{\label{pp0}Proof of Proposition \ref{pro0}}

Suppose that $\mathcal{S}_{n}\left(  c\right)  $ is nonempty and that%

\[
A\in\mathcal{A}\left(  n\right)  ,\ \ \ \mathbf{x}\geq0,\text{ \ \ }%
L_{1}\left(  A,\mathbf{x}\right)  =1,\text{ \ \ and \ \ \ }L_{2}\left(
A,\mathbf{x}\right)  =c.
\]
Then
\[
c=\sum_{1\leq i<j\leq n}a_{ij}x_{i}x_{j}\leq\sum_{1\leq i<j\leq n}x_{i}%
x_{j}=\frac{1}{2}\left(  \sum_{i}x_{i}\right)  ^{2}-\frac{1}{2}\sum_{i}%
x_{i}^{2}\leq\frac{n-1}{2n}<\frac{1}{2},
\]
and so, $c<1/2$ and $n\geq1/\left(  1-2c\right)  ;$ thus $n\geq\left\lceil
1/\left(  1-2c\right)  \right\rceil .$

On the other hand, if $c<1/2$ and $n\geq\left\lceil 1/\left(  1-2c\right)
\right\rceil ,$ let $A\in\mathcal{A}\left(  n\right)  $ be the matrix with all
off-diagonal entries equal to $1,$ and let $x,y$ satisfy
\begin{align*}
\binom{n-1}{2}x^{2}+\left(  n-1\right)  xy  &  =c,\\
\left(  n-1\right)  x+y  &  =1.
\end{align*}
Writing $\mathbf{x}$ for the $n$-vector $\left(  x,\ldots,x,y\right)  ,$ we
see that $L_{1}\left(  A,\mathbf{x}\right)  =1$ and $L_{2}\left(
A,\mathbf{x}\right)  =c;$ thus $\mathcal{S}_{n}\left(  c\right)  $ is
nonempty, completing the proof.$\hfill\square$

\section{\label{apx}Upper bounds on $k_{r}\left(  n,m\right)  $}

In this section we prove Theorem \ref{thmeq}. We start with some facts about
Tur\'{a}n graphs.

The $s$-partite Tur\'{a}n graph $T_{s}\left(  n\right)  $ is a complete
$s$-partite graph on $n$ vertices with each vertex class of size $\left\lfloor
n/s\right\rfloor $ or $\left\lceil n/s\right\rceil .$ Setting $t_{s}\left(
n\right)  =e\left(  T_{s}\left(  n\right)  \right)  ,$ after some algebra we
obtain
\[
t_{s}\left(  n\right)  =\frac{s-1}{2s}n^{2}-\frac{t\left(  s-t\right)  }{2s},
\]
where $t$ is the remainder of $n$ $\operatorname{mod}$ $s;$ hence,$_{{}}$%
\begin{equation}
\frac{s-1}{2s}n^{2}-\frac{s}{8}\leq t_{s}\left(  n\right)  \leq\frac{s-1}%
{2s}n^{2}. \label{turest}%
\end{equation}
It is known that the second one of these inequalities can be extended for all
$2\leq r\leq s:_{{}}$%

\begin{equation}
k_{r}\left(  T_{s}\left(  n\right)  \right)  \leq\binom{s}{r}\left(  \frac
{n}{s}\right)  ^{r}. \label{turestr}%
\end{equation}

The Tur\'{a}n graphs play an exceptional role for the function $k_{r}\left(
n,m\right)  :$ indeed, a result of Bollob\'{a}s \cite{Bol76} implies that if
$G$ is a graph with $n$ vertices and $t_{s}\left(  n\right)  $ edges, then
$k_{r}\left(  G\right)  \geq k_{r}\left(  T_{s}\left(  n\right)  \right)  ;$ hence,

\begin{fact}
\label{f1}$k_{r}\left(  n,t_{s}\left(  n\right)  \right)  =k_{r}\left(
T_{s}\left(  n\right)  \right)  .\hfill\square$
\end{fact}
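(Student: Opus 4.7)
The plan is very short, because the statement is an immediate two-sided consequence of the sentence that precedes it in the text. I would present it as the combination of a trivial upper bound (coming from the definition of $k_r(n,m)$) and the lower bound that has been attributed to Bollobás \cite{Bol76}.

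For the upper bound, I would simply note that the Tur\'an graph $T_s(n)$ has $n$ vertices and exactly $t_s(n)=e(T_s(n))$ edges, so $T_s(n)$ itself is an admissible competitor in the minimization
\[
k_r(n,t_s(n))=\min\{k_r(G):G\text{ has }n\text{ vertices and }t_s(n)\text{ edges}\}.
\]
Choosing $G=T_s(n)$ in this minimum immediately gives $k_r(n,t_s(n))\le k_r(T_s(n))$.

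For the matching lower bound, I would invoke the cited theorem of Bollob\'as: every graph $G$ with $n$ vertices and exactly $t_s(n)$ edges satisfies $k_r(G)\ge k_r(T_s(n))$. Taking the minimum over all such $G$ yields $k_r(n,t_s(n))\ge k_r(T_s(n))$. The two inequalities together give the claimed equality $k_r(n,t_s(n))=k_r(T_s(n))$.

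There is no real obstacle here; the entire combinatorial content has been deposited in the Bollob\'as result, which is used as a black box. The only thing to be slightly careful about is that the Bollob\'as bound is stated for graphs whose edge count is \emph{exactly} $t_s(n)$ (as opposed to $\ge t_s(n)$ or $\le t_s(n)$), which matches the definition of $k_r(n,t_s(n))$ used in the minimization. Once that alignment is observed, the proof collapses to the two one-line bounds above.
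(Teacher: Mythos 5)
Your proposal is correct and matches the paper's own treatment exactly: the paper derives Fact \ref{f1} from the Bollob\'as lower bound $k_r(G)\geq k_r(T_s(n))$ for graphs with $n$ vertices and $t_s(n)$ edges, with the upper bound being the implicit observation that $T_s(n)$ itself is a competitor in the minimum. Nothing further is needed.
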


Thus to simplify our presentation, we assume that $n\geq s\geq r\geq3$ are
fixed integers and $m$ is an integer satisfying $t_{s-1}\left(  n\right)
<m\leq t_{s}\left(  n\right)  $.

First we define a class of graphs giving upper bounds on $k_{r}\left(
n,m\right)  .$

\subsection*{The graphs $H\left(  n,m\right)  $}

We shall construct a graph $H\left(  n,m\right)  $ with $n$ vertices and $m$
edges, where $n,s,$ and $m$ satisfy $n\geq s\geq3$ and $t_{s-1}\left(
n\right)  <m\leq t_{s}\left(  n\right)  .$ Note that the construction of
$H\left(  n,m\right)  $ is independent of $r.$

First we define a sequence of graphs $H_{0},\ldots,H_{\left\lfloor
n/s\right\rfloor }$ satisfying
\begin{equation}
t_{s-1}\left(  n\right)  =e\left(  H_{0}\right)  <e\left(  H_{1}\right)
<\cdots<e\left(  H_{\left\lfloor n/s\right\rfloor }\right)  =t_{s}\left(
n\right)  , \label{ehi}%
\end{equation}
and then we construct $H\left(  n,m\right)  $ using $H_{0},\ldots
,H_{\left\lfloor n/s\right\rfloor }.$

\subsection*{The graphs $H_{0},\ldots,H_{\left\lfloor n/s\right\rfloor }$}

For every $0\leq i\leq\left\lfloor n/s\right\rfloor ,$ let $H_{i}$ be the
complete $s$-partite graph with vertex classes $I,V_{1},\ldots,V_{s-1}$ such
that $\left\vert I\right\vert =i$ and
\[
\left\lfloor \left(  n-i\right)  /\left(  s-1\right)  \right\rfloor
=\left\vert V_{1}\right\vert \leq\cdots\leq\left\vert V_{s-1}\right\vert
=\left\lceil \left(  n-i\right)  /\left(  s-1\right)  \right\rceil .
\]

Note that $H_{0}$ is the $\left(  s-1\right)  $-partite Tur\'{a}n graph
$T_{s-1}\left(  n\right)  ,$ but it is convenient to consider it $s$-partite
with an empty vertex class $I$. Note also that $H_{\left\lfloor
n/s\right\rfloor }=T_{s}\left(  n\right)  .$

The transition from $H_{i}$ to $H_{i+1}$ can be briefly summarized as follows:
select $V_{j}$ with $\left\vert V_{j}\right\vert =\left\lceil \left(
n-i\right)  /\left(  s-1\right)  \right\rceil $ and move a vertex $u$ from
$V_{j}$ to $I$.

In particular, we see that
\[
e\left(  H_{i+1}\right)  -e\left(  H_{i}\right)  =\left\lceil \left(
n-i\right)  /\left(  s-1\right)  \right\rceil -i>0,
\]
implying in turn (\ref{ehi}).

\subsection{Constructing $H\left(  n,m\right)  $}

Let $I,V_{1},\ldots,V_{s-1}$ be the vertex classes of $H_{i}.$ Select $V_{j}$
with $\left\vert V_{j}\right\vert =\left\lceil \left(  n-i\right)  /\left(
s-1\right)  \right\rceil $, select a vertex $u\in\left\vert V_{j}\right\vert
,$ let $l=\left\lceil \left(  n-i\right)  /\left(  s-1\right)  \right\rceil
-1,$ and suppose that $V_{j}\backslash\left\{  u\right\}  =\left\{
v_{1},\ldots,v_{l}\right\}  .$ Do the following steps:

\qquad(a) remove all edges joining $u$ to vertices in $I;$

\qquad(b) move $u$ from $V_{j}$ to $I,$ keeping all edges incident to $u;$

\qquad(c) for $m=e\left(  H_{i}\right)  +1,\ldots,e\left(  H_{i+1}\right)  $
join $u$ to $v_{m-e\left(  H_{i}\right)  }$ and write $H\left(  n,m\right)  $
for the resulting graph.

Two observations are in place: first, $e\left(  H\left(  n,m\right)  \right)
=m,$ and second, $H\left(  n,e\left(  H_{i}\right)  \right)  =H_{i}$ for every
$i=1,\ldots,\left\lfloor n/s\right\rfloor .$

Note also that every additional edge in step (c) increases the number of
$r$-cliques by $k_{r-2}\left(  H^{\prime}\right)  ,$ where $H^{^{\prime}}$ is
the fixed graph induced by the set $\left[  n\right]  \backslash\left(  I\cup
V_{j}\right)  .$ We thus make the following

\begin{claim}
\label{cl9}The function $k_{r}\left(  H\left(  n,m\right)  \right)  $
increases linearly in $m$ for $e\left(  H_{i-1}\right)  \leq m\leq e\left(
H_{i}\right)  .$
\end{claim}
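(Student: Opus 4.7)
The plan is to show that the single-step difference $k_r(H(n,m+1)) - k_r(H(n,m))$ takes the same value for every $m$ with $e(H_{i-1}) \le m < e(H_i)$; linearity of $k_r(H(n,m))$ on the interval follows at once. Each such increment corresponds, by the construction, to one new edge added in step~(c): an edge $uv_t$, where $u$ is the vertex moved from $V_j$ into $I$ and $v_t \in V_j$.

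The main step is to compute $N(u)\cap N(v_t)$ in the current graph and observe that it does not depend on $t$. Because $v_t\in V_j$, its neighbors are $u$ together with $I\cup\bigcup_{k\neq j}V_k$. Because step~(a) stripped $u$ of its edges to the old $I$, the vertex $u$ is adjacent to $\bigcup_{k\neq j}V_k$ together with whichever $v_1,\dots,v_{t-1}$ have already been reconnected in earlier iterations of step~(c). Intersecting: the old $I$ drops out since $u$ has no edges to it, and every other $v_s\in V_j$ drops out since $v_t$ has no edges to its own class, leaving precisely $\bigcup_{k\neq j}V_k = [n]\setminus (I\cup V_j)$.

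Any $r$-clique present in $H(n,m+1)$ but absent from $H(n,m)$ must contain the added edge $uv_t$, so takes the form $\{u,v_t\}\cup C$ with $C$ an $(r-2)$-clique in this common neighborhood. The subgraph $H'$ induced on $\bigcup_{k\neq j}V_k$ is a fixed complete $(s-1)$-partite graph that is untouched by step~(c). Hence each edge added in step~(c) contributes exactly $k_{r-2}(H')$ new $r$-cliques — the same constant at every step — and so $k_r(H(n,m))$ grows by the fixed amount $k_{r-2}(H')$ per unit increase of $m$ on the interval, proving the claim.

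The only point requiring genuine care is the common-neighborhood calculation: one must verify that neither the old vertices of $I$ nor the not-yet-reconnected members of $V_j$ contribute to $N(u)\cap N(v_t)$, and that the graph $H'$ is truly invariant throughout step~(c). No deeper obstacle arises; the argument is essentially the same observation already highlighted in the paragraph preceding the claim, packaged into the linearity statement.
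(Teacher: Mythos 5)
Your proof is correct and takes essentially the same approach as the paper: the paper's entire justification is the observation in the paragraph immediately preceding the claim that each edge added in step~(c) creates exactly $k_{r-2}(H')$ new $r$-cliques, with $H'$ the fixed graph induced on $[n]\setminus(I\cup V_j)$, and your common-neighborhood computation is precisely the verification of that observation. (One harmless slip: $H'$ is complete $(s-2)$-partite, not $(s-1)$-partite, but your argument only uses that $H'$ is untouched by step~(c), which is true.)
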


We need also the following upper bound on $k_{r}\left(  H_{i}\right)  .$

\begin{claim}
\label{cl12}%
\[
k_{r}\left(  H_{i}\right)  \leq\binom{s-1}{r-1}\left(  \frac{n-i}{s-1}\right)
^{r-1}i+\binom{s-1}{r}\left(  \frac{n-i}{s-1}\right)  ^{r}%
\]

\end{claim}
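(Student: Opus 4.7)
The plan is to decompose the $r$-cliques of $H_i$ according to their intersection with the vertex class $I$, and then apply the inequality (\ref{turestr}) to the two resulting Tur\'an-graph pieces.

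First I would observe that $H_i$ is complete $s$-partite with vertex classes $I, V_1, \ldots, V_{s-1}$, so every clique of $H_i$ meets each class in at most one vertex; in particular every $r$-clique contains either $0$ or $1$ vertex of $I$. This partitions $k_r(H_i)$ into two summands, counted separately.

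For the cliques disjoint from $I$: they are precisely the $r$-cliques in the subgraph induced on $V_1 \cup \cdots \cup V_{s-1}$, which by construction of $H_i$ is the Tur\'an graph $T_{s-1}(n-i)$. Applying (\ref{turestr}) with parameters $(n-i, s-1)$ bounds their number by $\binom{s-1}{r}\bigl(\tfrac{n-i}{s-1}\bigr)^r$. For the cliques containing a (unique) vertex $u \in I$, I would choose $u$ in $i$ ways and then count $(r-1)$-cliques among its neighbours outside $I$; since $u$ is joined to every vertex of $V_1 \cup \cdots \cup V_{s-1}$, those neighbours induce exactly $T_{s-1}(n-i)$. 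By (\ref{turestr}) again, this contributes at most $i \binom{s-1}{r-1}\bigl(\tfrac{n-i}{s-1}\bigr)^{r-1}$. Summing the two bounds gives the stated inequality.

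There is essentially no obstacle: the only point that needs a quick sanity check is that the subgraph of $H_i$ induced on $V_1 \cup \cdots \cup V_{s-1}$ is genuinely $T_{s-1}(n-i)$, which follows immediately from the part sizes $\lfloor (n-i)/(s-1)\rfloor$ and $\lceil (n-i)/(s-1)\rceil$ specified in the definition of $H_i$. Everything else is just the trivial decomposition of cliques according to whether they hit the independent set $I$.
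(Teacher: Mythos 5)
Your proof is correct and follows essentially the same route as the paper: decompose $r$-cliques of $H_i$ by their intersection (size $0$ or $1$) with the class $I$, identify the graph on $V_1\cup\cdots\cup V_{s-1}$ as $T_{s-1}(n-i)$, and bound each piece via inequality (\ref{turestr}). The paper simply compresses your two-case count into the single line $k_r(H_i)\leq k_{r-1}(T_{s-1}(n-i))\,i+k_r(T_{s-1}(n-i))$ before applying (\ref{turestr}).
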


\begin{proof}
Let $I,V_{1},\ldots,V_{s-1}$ be the vertex classes of $H_{i}.$ Since the sizes
of the sets $V_{1},\ldots,V_{s-1}$ differ by at most $1,$ we see that the set
$V_{1}\cup\ldots\cup V_{s-1}$ induces the Tur\'{a}n graph $T_{s-1}\left(
n-i\right)  .$ Hence a straightforward counting gives
\[
k_{r}\left(  H_{i}\right)  \leq k_{r-1}\left(  T_{s-1}\left(  n-i\right)
\right)  i+k_{r}\left(  T_{s-1}\left(  n-i\right)  \right)  ,
\]
and the claim follows from inequality (\ref{turestr}).
\end{proof}

\subsection{Proof of Theorem \ref{thmeq}}

Assume that $x$ is a real number satisfying
\[
\frac{s-2}{2\left(  s-1\right)  }n^{2}<x\leq\frac{s-1}{2s}n^{2}.
\]
and define the functions $p=p\left(  x\right)  $ and $q=q\left(  x\right)  $
by
\begin{align}
p  &  \geq q,\label{c1}\\
\left(  s-1\right)  p+q  &  =n,\label{c2}\\
\binom{s-1}{2}p^{2}+\left(  s-1\right)  pq  &  =x. \label{c3}%
\end{align}

We note that%
\[
p\left(  x\right)  =\frac{1}{s}\left(  n+\sqrt{n^{2}-\frac{2s}{s-1}x}\right)
,\text{ \ \ }q\left(  x\right)  =\frac{1}{s}\left(  n-\left(  s-1\right)
\sqrt{n^{2}-\frac{2s}{s-1}x}\right)  .
\]
Set
\begin{equation}
f\left(  x\right)  =\binom{s-1}{r}p^{r}+\binom{s-1}{r-1}p^{r-1}q, \label{feq}%
\end{equation}
and note that $f\left(  x\right)  =\varphi_{r}\left(  x/n^{2}\right)  n^{r};$
hence, to prove Theorem \ref{thmeq}, it is enough to show that if
\[
\frac{s-2}{2\left(  s-1\right)  }n^{2}<m\leq\frac{s-1}{2s}n^{2},
\]
then
\begin{equation}
k_{r}\left(  n,m\right)  \leq f\left(  m\right)  +\frac{n^{r}}{n^{2}-2m}.
\label{meq}%
\end{equation}

We first introduce the auxiliary function $\widehat{f}\left(  x\right)  ,$
defined for $x\in\left[  t_{s-1}\left(  n\right)  ,t_{s}\left(  n\right)
\right]  $ by
\[
\widehat{f}\left(  x\right)  =\left\{
\begin{tabular}
[c]{ll}%
$f\left(  x+\frac{s-1}{8}\right)  ,$\bigskip & $\text{if }t_{s-1}\left(
n\right)  <x\leq\frac{s-1}{2s}n^{2}-\frac{s-1}{8};$\\
$f\left(  \frac{s-1}{2s}n^{2}\right)  ,$ & $\text{if }\frac{s-1}{2s}%
n^{2}-\frac{s-1}{8}<x\leq t_{s}\left(  n\right)  .$%
\end{tabular}
\ \right.
\]

To finish the proof of Theorem \ref{thmeq} we first show that
\begin{equation}
k_{r}\left(  H\left(  n,m\right)  \right)  \leq\widehat{f}\left(  m\right)  ,
\label{meq1}%
\end{equation}
and then derive (\ref{meq}) using Taylor's expansion and the fact that
$k_{r}\left(  n,m\right)  \leq k_{r}\left(  H\left(  n,m\right)  \right)  .$

\begin{claim}
\label{cl13}If $m=e\left(  H_{i}\right)  ,$ then%
\[
k_{r}\left(  H_{i}\right)  \leq f\left(  m-t_{s-1}\left(  n-i\right)
+\frac{s-2}{2\left(  s-1\right)  }\left(  n-i\right)  ^{2}\right)  .
\]

\end{claim}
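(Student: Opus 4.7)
The strategy is a direct verification: I will identify the argument $x^{\ast}$ of $f$ explicitly, guess the values of $p(x^{\ast})$ and $q(x^{\ast})$, check that they satisfy (\ref{c1})--(\ref{c3}), and then observe that $f(x^{\ast})$ equals the upper bound already established in Claim \ref{cl12}.

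First I would unpack the argument of $f$. Since $H_i$ is the complete $s$-partite graph with vertex classes $I,V_1,\ldots,V_{s-1}$ of sizes $i,\lfloor(n-i)/(s-1)\rfloor,\ldots,\lceil(n-i)/(s-1)\rceil$, the subgraph induced by $V_1\cup\cdots\cup V_{s-1}$ is exactly $T_{s-1}(n-i)$, and $I$ is joined completely to this Tur\'an graph. Therefore
\[
m=e(H_i)=t_{s-1}(n-i)+i(n-i),
\]
so that
\[
x^{\ast}:=m-t_{s-1}(n-i)+\tfrac{s-2}{2(s-1)}(n-i)^{2}=i(n-i)+\tfrac{s-2}{2(s-1)}(n-i)^{2}.
\]

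Next I would verify that $p^{\ast}:=(n-i)/(s-1)$ and $q^{\ast}:=i$ solve the defining system (\ref{c1})--(\ref{c3}) at $x=x^{\ast}$. Indeed $(s-1)p^{\ast}+q^{\ast}=n$ is immediate, and
\[
\binom{s-1}{2}(p^{\ast})^{2}+(s-1)p^{\ast}q^{\ast}=\tfrac{s-2}{2(s-1)}(n-i)^{2}+i(n-i)=x^{\ast}.
\]
The inequality $p^{\ast}\geq q^{\ast}$ follows from $i\leq\lfloor n/s\rfloor$, which is part of the range $0\leq i\leq\lfloor n/s\rfloor$ used in defining $H_i$. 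By the uniqueness of the solution of (\ref{c1})--(\ref{c3}) (given by the explicit formulas for $p(x),q(x)$), we conclude $p(x^{\ast})=p^{\ast}$ and $q(x^{\ast})=q^{\ast}$.

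Plugging into the definition (\ref{feq}) of $f$ gives
\[
f(x^{\ast})=\binom{s-1}{r}\left(\tfrac{n-i}{s-1}\right)^{r}+\binom{s-1}{r-1}\left(\tfrac{n-i}{s-1}\right)^{r-1}i,
\]
which is precisely the upper bound on $k_{r}(H_i)$ proved in Claim \ref{cl12}. Combining these two facts yields the desired inequality. There is essentially no obstacle here beyond the algebra: the only thing to be careful about is the identification of $e(H_i)$ and the quick check that $(p^{\ast},q^{\ast})$ really is the branch of the solution $p\geq q$, which is automatic in the range $i\le\lfloor n/s\rfloor$.
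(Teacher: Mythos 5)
Your proof is correct and follows essentially the same route as the paper: use $e(H_i)=t_{s-1}(n-i)+i(n-i)$, identify the argument $x^{\ast}$ of $f$ so that $q(x^{\ast})=i$ and $p(x^{\ast})=(n-i)/(s-1)$, and then observe that $f(x^{\ast})$ coincides with the bound of Claim~\ref{cl12}. Your explicit verification of (\ref{c1})--(\ref{c3}) is a worthwhile addition, and it incidentally confirms that the coefficient $\tfrac{s-2}{2(s-1)}$ in the claim statement is the correct one (the paper's proof body writes $\tfrac{s-1}{2s}$, which is evidently a typographical slip).
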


\begin{proof}
Indeed, as mentioned above, the set $V_{1}\cup\cdots\cup V_{s-1}$ induces a
$T_{s-1}\left(  n-i\right)  ;$ hence,
\[
i\left(  n-i\right)  +t_{s-1}\left(  n-i\right)  =m,
\]
and so,%
\[
i\left(  n-i\right)  +\frac{s-1}{2s}\left(  n-i\right)  ^{2}=m-t_{s-1}\left(
n-i\right)  +\frac{s-1}{2s}\left(  n-i\right)  ^{2}.
\]
Set
\[
m^{\prime}=m-t_{s-1}\left(  n-i\right)  +\frac{s-1}{2s}\left(  n-i\right)
^{2}%
\]
and note that $i=q\left(  m^{\prime}\right)  .$ In view of Claim \ref{cl12},
we obtain%
\[
k_{r}\left(  H_{i}\right)  \leq\binom{s-1}{r-1}\left(  \frac{n-i}{s-1}\right)
^{r-1}i+\binom{s-1}{r}\left(  \frac{n-i}{s-1}\right)  ^{r}=f\left(  m^{\prime
}\right)  ,
\]
completing the proof.
\end{proof}

\begin{claim}
\label{cl10}$f^{^{\prime}}\left(  x\right)  =\binom{s-2}{r-2}p^{r-2}.$
\end{claim}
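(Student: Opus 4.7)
The plan is to prove Claim \ref{cl10} by implicit differentiation. Treat $p=p(x)$ and $q=q(x)$ as the smooth functions of $x$ defined by (\ref{c1})--(\ref{c3}), extract $p'(x)$ and $q'(x)$ from those defining relations, and then apply the chain rule to (\ref{feq}).

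First I would differentiate (\ref{c2}) and (\ref{c3}) with respect to $x$. Differentiating (\ref{c2}) gives immediately $q' = -(s-1)p'$. Substituting this relation into the derivative of (\ref{c3}) yields a linear equation in $p'$ alone that simplifies to $p'(x) = 1/\bigl((s-1)(q-p)\bigr)$. (As a sanity check, one can differentiate the explicit formula $p(x) = \frac{1}{s}\bigl(n+\sqrt{n^{2}-2sx/(s-1)}\bigr)$ already displayed in the text and observe $p-q = \sqrt{n^{2}-2sx/(s-1)}$, recovering the same expression.)

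Next I would expand $f'(x)$ using the chain rule on (\ref{feq}), substitute $q'=-(s-1)p'$, and factor out the common $p^{r-2}p'$. What remains in the bracket is
\[
r\binom{s-1}{r}p + (r-1)\binom{s-1}{r-1}q - (s-1)\binom{s-1}{r-1}p.
\]
Applying the binomial identity $r\binom{s-1}{r} = (s-r)\binom{s-1}{r-1}$ collapses this expression to $(r-1)\binom{s-1}{r-1}(q-p)$. Multiplying by $p' = 1/\bigl((s-1)(q-p)\bigr)$ then cancels the factor $(q-p)$, leaving
\[
f'(x) = p^{r-2}\cdot\frac{r-1}{s-1}\binom{s-1}{r-1} = p^{r-2}\binom{s-2}{r-2},
\]
which is the asserted identity.

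The argument is entirely mechanical, so no real obstacle is anticipated; the only point requiring mild care is the sign of $q-p$ (negative, since $p\geq q$ by (\ref{c1})), which is needed to see that $p'(x)$ comes out with the correct sign and, more importantly, that the $(q-p)$ factor cancels cleanly rather than producing a spurious sign or vanishing denominator in the non-degenerate range $p>q$.
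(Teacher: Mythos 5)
Your proposal is correct and follows essentially the same route as the paper: implicit differentiation of (\ref{c2}) and (\ref{c3}) to obtain $q'=-(s-1)p'$ and $(s-1)p'(q-p)=1$, then the chain rule on (\ref{feq}) together with the identity $r\binom{s-1}{r}=(s-r)\binom{s-1}{r-1}$. The only cosmetic difference is that the paper invokes that binomial identity before differentiating (rewriting $f$ first), whereas you invoke it afterwards to collapse the bracket; the algebra is the same.
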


\begin{proof}
From (\ref{feq}) we have%
\[
f\left(  x\right)  =\binom{s-1}{r-1}\left(  \frac{s-r}{r}p^{r}+p^{r-1}%
q\right)  ,
\]
and so,
\[
f^{\prime}\left(  x\right)  =\binom{s-1}{r-1}\left(  \left(  s-r\right)
p^{r-1}p^{\prime}+\left(  r-1\right)  p^{r-2}qp^{\prime}+p^{r-1}q^{\prime
}\right)  .
\]
From (\ref{c2}) and (\ref{c3}) we have%
\[
\left(  s-1\right)  p^{\prime}+q^{\prime}=0
\]
and
\[
\left(  s-1\right)  \left(  \left(  s-2\right)  pp^{\prime}+p^{\prime
}q+pq^{\prime}\right)  =\left(  s-1\right)  p^{\prime}\left(  q-p\right)
=x^{\prime}=1.
\]
Now the claim follows after simple algebra.
\end{proof}

We immediately see that $f\left(  x\right)  $ is increasing. Also, since
$p\left(  x\right)  $ is decreasing, $f^{^{\prime}}\left(  x\right)  $ is
decreasing too, implying that $f\left(  x\right)  $ is concave. This, in turn,
implies that $\widehat{f}\left(  x\right)  $ is concave.

For every $i=1,\ldots,\left\lfloor n/s\right\rfloor ,$ by Claim \ref{cl13}, we
have%
\[
k_{r}\left(  H_{i}\right)  \leq f\left(  m^{\prime}\right)  \leq\widehat
{f}\left(  m\right)  ,
\]
and since, by Claim \ref{cl9}, $k_{r}\left(  H\left(  n,m\right)  \right)  $
is linear for $m\in\left[  e\left(  H_{i}\right)  ,e\left(  H_{i+1}\right)
\right]  ,$ inequality (\ref{meq1}) follows.

To finish the proof of (\ref{meq}), note that by Taylor's formula, in view of
the concavity of $f\left(  x\right)  ,$ we have%
\begin{align*}
\widehat{f}\left(  m\right)   &  \leq f\left(  m+\frac{s-1}{8}\right)  \leq
f\left(  m\right)  +\frac{s-1}{8}f^{\prime}\left(  m\right)  =f\left(
m\right)  +\frac{s-1}{8}\binom{s-2}{r-2}p^{r-2}\\
&  \leq f\left(  m\right)  +\frac{s-1}{8}\binom{s-2}{r-2}\left(  \frac{n}%
{s-1}\right)  ^{r-2}<f\left(  m\right)  +sn^{r-2}\leq f\left(  m\right)
+\frac{n^{r}}{n^{2}-2m},
\end{align*}
completing the proof of Theorem \ref{thmeq}.$\hfill\square$\bigskip

\textbf{Acknowledgement }Thanks to Cecil Rousseau for helpful discussions and
to Alex Razborov for pointing out some mistakes in the initial version of the manuscript.

\end{document}